\newcommand{\N}{\mathds{N}}
\newcommand{\R}{\mathds{R}}
\newcommand{\Z}{\mathds{Z}}
\newcommand{\cC}{\mathcal{C}}
\newcommand{\nA}{A}
\newcommand{\nB}{B}
\newcommand{\bd}{d}
\newcommand{\arity}{2}
\theoremstyle{remark}
\newtheorem{thm}{Theorem}
\newtheorem{lem}[thm]{Lemma}
\newtheorem{prop}[thm]{Proposition}
\theoremstyle{remark}
\newtheorem{defi}[thm]{Definition}
\newtheorem{rmk}[thm]{Remark}
\journal{Journal of Computational and Applied Mathematics}
\begin{document}

\begin{frontmatter}
    \title{Activation functions enabling the addition of neurons\\ and layers without altering outcomes}
    \author{Sergio López-Ureña}
    \ead{sergio.lopez-urena@uv.es}
    \cortext[cor]{Corresponding author}
    \address{Dept. de Matem\`atiques, Universitat de Val\`encia, Doctor Moliner Street 50, 46100 Burjassot, Val\`encia, Spain.}
    
    \begin{abstract}
        In this work, we propose activation functions for neuronal networks that are \emph{refinable} and \emph{sum the identity}. This new class of activation functions allows the insertion of new layers between existing ones and/or the increase of neurons in a layer, both without altering the network outputs.
        
        Our approach is grounded in \emph{subdivision} theory. The proposed activation functions are constructed from basic limit functions of convergent subdivision schemes. As a showcase of our results, we introduce a family of spline activation functions and provide comprehensive details for their practical implementation.
    \end{abstract}

    \begin{keyword}
        Neural networks \sep activation functions \sep refinable functions \sep subdivision schemes.
    \end{keyword}
    
\end{frontmatter}

% Comment out for final accepted paper submission
% \linenumbers

\section{Introduction}

In recent years, neural networks (NNs) have achieved remarkable success across various domains, including image recognition, natural language processing, and predictive modeling. This widespread success is largely attributed to their ability to learn complex patterns and representations from vast amounts of data. As a result, the architecture of NNs has evolved to include a large number of parameters and layers, enhancing their capacity to solve intricate problems. However, this growth in complexity also poses significant challenges, particularly in optimizing and managing these extensive networks (as discussed by \cite{DLHSX20}, for instance).

In this line, the optimization of architectures has become a strategic research topic (as explored by \cite{EMH19,PK18}). In these works, the progressive modification of the network and the reuse of fitted parameters are key ideas for efficient training (according to \cite{Lof19}). Growing a network---either by increasing the number of neurons in a layer (widening) or by adding new layers (deepening)---creates opportunities for further optimization. A prominent strategy involves transferring the knowledge acquired by a trained NN to a wider or deeper one through \emph{function-preserving} transformations. Crucially, this approach preserves the function of the original network, ensuring that the new NN produces identical results to its predecessor. Preserving the acquired knowledge facilitates the training process of the new larger NN (as demonstrated in \cite{JSH19}).

Existing methods, such as Net2Net by \cite{CGS15} and NetMorph by \cite{WWRC16}, have shown promising results compared to training enlarged networks from scratch. However, as \cite{WWCW19} observes, these approaches suffer from an initial performance drop during training due to excessive zero-padding in parameter matrices. To this issue, \cite{WWCW19} proposes tensor decomposition with randomized initialization, a computationally demanding solution. For layer insertion, prior methods relied on idempotent activation functions (a very restrictive condition, as in \cite{CGS15}) or parameter-dependent activation functions, consisting in a convex combination of some activation function with the identity, as in \cite{WWRC16,WWCW19}.

The present work introduces a novel framework grounded in approximation theory for NN function-preservation. For layer widening, we propose a subdivision-inspired splitting mechanism wherein each neuron is decomposed into scaled and shifted copies. This allows for more localized adjustments within the network, akin to multiresolution analysis, and is achieved through the use of \emph{refinable} activation functions. For layer insertion, we present parameter-free activation functions that sum the identity---a condition that is less restrictive than idempotence. This solution also emerges naturally from subdivision theory, extending its conceptual foundations to neural network design.

The proposed method offers three key advantages. First, it provides an explicit and efficient formula for computing parameters, which circumvents computationally expensive routines and extensive zero padding. Second, it allows for refined modifications via scaling-shifting mechanisms that align with the principles of multiresolution analysis. Finally, it enables layer insertion, regardless of the activation functions employed in other layers.

From now on, we consider \emph{multilayer perceptrons} defined by sequences of layer operators $L:\R^{n_0}\to\R^{n_1}$,
$$L(x) = \sigma(Wx + b) = \left[\sigma\left(\sum_{j=0}^{n_0-1} w_{i,j}x_j + b_i\right)\right]_{i=0}^{n_{1}-1},$$
where the parameters $W\in\R^{n_1\times n_0}$ and $b\in\R^{n_1}$ are referred to as \emph{weights} and \emph{biases}, respectively. Here, $\sigma:\R\to\R$ denotes the activation function applied componentwise to the vector $Wx+b$.

Our method hinges on two key properties: The activation function must be \emph{refinable} and must \emph{sum the identity}. The former permits the subdivision of a neuron into multiple neurons (Theorem \ref{thm_increase_neurons}), while the latter enables the insertion of new layers (Theorems \ref{thm_insert_layer_pre} and \ref{thm_insert_layer_post}), both operations maintaining the outcomes of the NN unchanged.

\begin{defi} \label{def_refinable}
    A function $\sigma:\R\to\R$ is \emph{refinable} if there exists $\tau\in\R$ and some coefficients $a_l\in\R$, $l=0,\ldots,\nA-1$, $\nA\in\N$, such that
    \begin{equation}\label{eq_refinable}
        \sigma(t) = \sum_{l=0}^{\nA-1} a_l\sigma(\arity t + \tau - l ), \quad \forall t\in\R.
    \end{equation}
\end{defi}

\begin{defi} \label{defi_sum_identity}
    Let $I\subset \R$ be an interval containing $0$ in its interior.
    A function $\sigma:\R\to\R$ \emph{sums the identity} in $I$ iff $\exists \mu\in\R$ and $B\in\N$ such that
    \begin{equation}\label{eq_identity}
        t = \sum_{l=0}^{\nB-1} \sigma(t + \mu - l), \qquad \forall t\in I.
    \end{equation}
\end{defi}

Activation functions constructed from \emph{basic limit functions} of \emph{convergent subdivision schemes} are defined in this paper. For an introduction to this topic, readers are referred to \cite{CDM90, Dyn92, DL02}. The necessary concepts required to understand the contents of this paper are provided herein.

Subdivision theory provides numerous examples of refinable functions, such as the B-Splines, which are examined by \cite{Dyn92}. However, these refinable functions are compactly supported, which inherently makes them non-monotone, whereas activation functions are typically monotone. In Section \ref{sec_subdivision}, we demonstrate how non-decreasing refinable functions can be constructed using compactly supported refinable functions. 

Previous research has connected the theory of refinable functions and subdivision schemes with NN theory, though in a different way from our approach. \cite{Daubechies22} studied the capability of Multi-Layer Perceptrons to approximate refinable functions. That paper and its references illustrate the wide range of applications for refinable functions, including Computer-Aided Design (through subdivision theory), multiresolution analysis (such as wavelet theory), and Markov chains. NNs have also been employed to design subdivision schemes in a data-driven approach, as shown by \cite{LKCAJ20}.

Before addressing the general case in Section \ref{sec_subdivision}, an example of application of our results is provided in Section \ref{sec_bsplines}. In particular, activation functions based on B-Splines are presented, which are refinable and sum the identity. These activation functions, along with their derivatives, are straightforward to compute, making them well-suited for practical applications.

To introduce the results of this work, two activation functions based on B-Splines are now presented:
\begin{equation}\label{eq_sigma:bsplines}
    \sigma_{B^1}(t)
    = \begin{cases}
        -\frac12, & \text{if }t\leq -\frac12,\\[5pt]
        t, & \text{if }-\frac12\leq t\leq \frac12,\\[5pt]
        \frac12, & \text{if }\frac12 \leq t,
    \end{cases}
    \qquad
    \sigma_{B^2}(t) = \begin{cases}
        -\frac12, & \text{if }t\leq -1,\\[5pt]
        t\left(1-\frac{|t|}2\right), & \text{if }-1\leq t\leq 1,\\[5pt]
        \frac12,  & \text{if }1\leq t.
    \end{cases}
\end{equation}
The graphs of these functions can be found in Figure \ref{fig:refinement}. It can be seen that $\sigma_{B^1}$ is a piecewise linear function, which is a shifted version of a clipped ReLU activation function.

The activation functions $\sigma_{B^1},\sigma_{B^2}$ are refinable, since
\begin{equation} \label{eq_introduction_refinable}
    \sigma_{B^1}(t) = \frac12\sigma_{B^1}\left(2t+\frac12\right) + \frac12\sigma_{B^1}\left(2t-\frac12\right), \quad
    \sigma_{B^2}(t) = \frac14\sigma_{B^2}(2t+1) + \frac12\sigma_{B^2}(2t) + \frac14\sigma_{B^2}(2t-1).
\end{equation}
We provide a Wolfram Mathematica notebook to facilitate the verification of some computations discussed in this paper, including those in equations \eqref{eq_introduction_refinable} and \eqref{eq_sigmaB2_sums_identity}. Details on accessing the notebook can be found in the Reproducibility section.
From \eqref{eq_introduction_refinable} we can see that $\sigma_{B^1}$ fulfils Definition \ref{def_refinable} with $\tau = \frac12$, $\nA=2$, $a_0=\frac12$, $a_1=\frac12$, and $\sigma_{B^2}$ with $\tau = 1$, $\nA=3$, $a_0=\frac14$, $a_1=\frac12$, $a_2=\frac14$.
Another example of refinable activation functions is the linear one, $\sigma(x)=\text{id}(x)=x$, fulling the refinability property for any $\nA\in\N$ with $\tau = (\nA-1)/2$, $a_l=\frac1{2 A}$, $l=0,\ldots,\nA-1$. See Figure \ref{fig:refinement} for an illustration of these activations functions and their refinability properties.

\begin{figure}[h] \label{fig:refinement}
    \centering
    \includegraphics[width=0.32\textwidth]{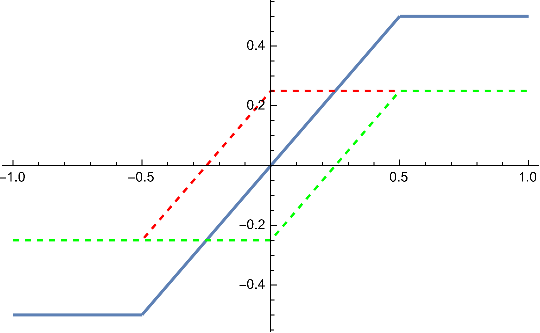}
    \includegraphics[width=0.32\textwidth]{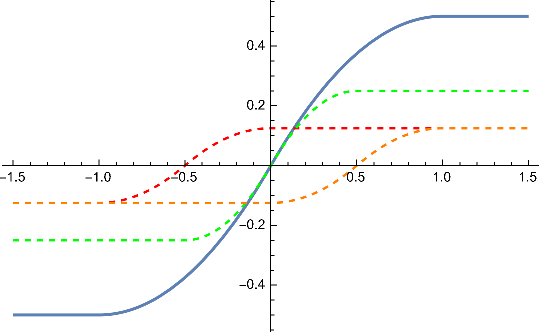}
    \includegraphics[width=0.32\textwidth]{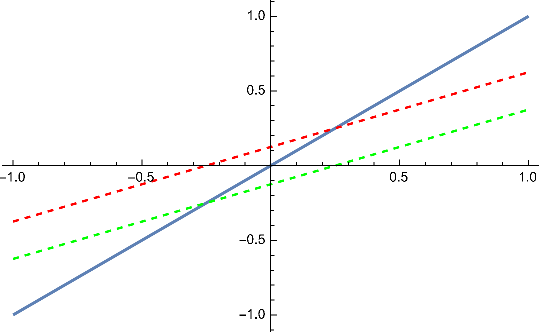}
    \caption{
    An illustration of \eqref{eq_introduction_refinable} is presented. The continuous blue lines represent the refinable activation functions $\sigma_{B^1},\sigma_{B^2},\text{id}$ (from left to right), while the dashed lines show these same functions scaled by 2, shifted and multiplied by a constant. In each graph, the dashed lines sum the continuous blue line.
    }
\end{figure}

These activation functions also sum the identity, as asserted in Theorem \ref{thm_sigmaB_properties}. It is evident that $\text{id}(t) = t$ and $\sigma_{B^1}(t) = t$, $t\in[-\frac12,\frac12]$, while it can be checked that
\begin{equation} \label{eq_sigmaB2_sums_identity}
    \sigma_{B^2}\left(t + \frac12\right) + \sigma_{B^2}\left(t - \frac12\right) = t, \qquad t\in\left[-\frac12,\frac12\right].
\end{equation}  

In general, for every convergent subdivision scheme that generates first-degree polynomials (commonly met properties), the activation function proposed in Definition \ref{defi_sigma_from_phi} is refinable and sums to the identity, as demonstrated in Theorem \ref{thm_sigma_properties}.

The paper is structured as follows. Section \ref{sec_refining} demonstrates the process of inserting new layers and adding neurons to existing ones without altering the outcomes. Pseudocodes for such function-preserving transformations are provided in \ref{algorithms}. Section \ref{sec_bsplines} introduces the spline activation functions and their properties. Section \ref{sec_subdivision} presents more general results, allowing the construction of new activation functions using subdivision theory. 
Finally, Section \ref{sec_conclusions} offers concluding remarks and outlines future research directions.

\section{Refining a neural network} \label{sec_refining}
\subsection{Increasing the number of neurons of a layer by subdivision} \label{sec_increase_neurons}

This section demonstrates that each neuron can be split into $\nA$ neurons without modifying the outcomes of the NN, where $\nA$ was introduced in Definition \ref{def_refinable}. Consequently, the number of neurons in any layer can be multiplied by $\nA$. To achieve this, the weights and biases must be updated according to the formulas provided in Theorem \ref{thm_increase_neurons}.

Consider a NN that includes three consecutive layers of neurons, comprising $n_0,n_1,n_2$ neurons, respectively. We denote the operators connecting them by
\begin{align*}
    L^0 &: \R^{n_0} \to \R^{n_1}, \quad L^0(x) = \sigma^0(W^0 x + b^0), \quad W^0\in\R^{n_1\times n_0}, \quad b^0\in\R^{n_1},\\
    L^1 &: \R^{n_1} \to \R^{n_2}, \quad L^1(x) = \sigma^1(W^1 x + b^1), \quad W^1\in\R^{n_2\times n_1}, \quad b^1\in\R^{n_2},
\end{align*}
where $\sigma^0$ is a refinable activation function, while $\sigma^1$ may not be refinable. We propose to split a neuron of the middle layer by updating $L_0,L_1$ with the two new layer operators
\begin{align*}
    \overline{L}^0 &: \R^{n_0} \to \R^{\bar n_1}, \quad \overline{L}^0(x) = \sigma^0(\overline{W}^0 x + \overline{b}^0), \quad \overline{W}^0\in\R^{\bar n_1\times n_0}, \quad \overline{b}^0\in\R^{\bar n_1},\\
    \overline{L}^1 &: \R^{\bar n_1} \to \R^{n_2}, \quad \overline{L}^1(x) = \sigma^1(\overline{W}^1 x + b^1), \quad \overline{W}^1\in\R^{n_2\times \bar n_1},
\end{align*}
in a way that the outcomes of the layers are not altered, or mathematically speaking:
$$L^1\circ L^0 = \overline{L}^1\circ \overline{L}^0.$$
The layer with $n_1$ neurons is increased to $\bar{n}_1 := n_1 + \nA - 1$ neurons. The parameters $\overline{W}^0,\overline{b}^0,\overline{W}^1$ are defined in Theorem \ref{thm_increase_neurons}.

Since the neurons in a layer are arbitrarily sorted, we can focus on subdividing the first neuron of the layer to simplify the notation.
\begin{thm} \label{thm_increase_neurons}
    Let $\sigma^0, \sigma^1$ be activation functions, being $\sigma^0$ refinable as in \eqref{eq_refinable}.
    Given $W^{0}\in \R^{n_1\times n_{0}}$, $W^1\in\R^{n_{2}\times n_{1}}$ and $b^{0}\in\R^{n_{1}}$, we define the new weights $\overline{W}^{0}\in \R^{(n_1 + \nA-1)\times n_{0}}$, $\overline{W}^{1}\in \R^{n_{2}\times (n_1 + \nA-1)}$ and biases $\overline{b}^{0}\in\R^{n_{1}+\nA-1}$ as
    \begin{alignat*}{4}
        &\overline{W}^{0}_{l,:} := \arity W^{0}_{0,:}, && \quad \overline{b}^{0}_l := \arity b^{0}_0 +\tau - l, && \quad \overline{W}^{1}_{:,l} := a_l W^{1}_{:,0}, & \qquad l= 0, 1, \ldots, \nA-1,\\
        &\overline{W}^{0}_{i+\nA-1,:} := W^{0}_{i,:}, && \quad \overline{b}^{0}_{i+\nA-1} := b^{0}_i, && \quad \overline{W}^{1}_{:,i+\nA-1} := W^{1}_{:,i}, & \qquad i= 1, 2, \ldots, n_1-1,
    \end{alignat*}
    where $W_{i,:}$ and $W_{:,i}$ denote the $i$-th row and column of a matrix $W$, respectively.
    Then,
\(
    L^1 \circ L^{0} = \overline{L}^{1} \circ \overline{L}^{0}.
\)
\end{thm}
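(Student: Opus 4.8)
The plan is to verify $L^1\circ L^0 = \overline{L}^1\circ\overline{L}^0$ by direct computation, tracking what happens to the single neuron being subdivided (the $0$-th one) while the remaining neurons $i=1,\ldots,n_1-1$ are carried through unchanged. The key observation is that for a fixed input $x\in\R^{n_0}$, both compositions are determined by how the middle layer's pre-activation values get fed into $L^1$ (resp. $\overline{L}^1$). So first I would fix $x$ and write $z := W^0 x + b^0 \in\R^{n_1}$, the pre-activation of the original middle layer, noting $z_0 = W^0_{0,:}x + b^0_0$. I would then expand $\overline{L}^1(\overline{L}^0(x))$ and show it equals $L^1(L^0(x))$.

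Next I would compute $\overline{L}^0(x) = \sigma^0(\overline{W}^0 x + \overline{b}^0)$ componentwise. For the split neurons $l=0,\ldots,\nA-1$, the $l$-th pre-activation is $\arity W^0_{0,:}x + \arity b^0_0 + \tau - l = \arity z_0 + \tau - l$, so the activation value is $\sigma^0(\arity z_0 + \tau - l)$. For the carried-over neurons $i=1,\ldots,n_1-1$ (sitting at index $i+\nA-1$), the pre-activation is $W^0_{i,:}x + b^0_i = z_i$, giving $\sigma^0(z_i)$, exactly as in the original network. The crucial step is then applying $\overline{W}^1$: the contribution of the split neurons to the output pre-activation $\overline{W}^1\overline{L}^0(x)$ is
\begin{equation*}
    \sum_{l=0}^{\nA-1} \overline{W}^1_{:,l}\,\sigma^0(\arity z_0 + \tau - l)
    = W^1_{:,0}\sum_{l=0}^{\nA-1} a_l\,\sigma^0(\arity z_0 + \tau - l)
    = W^1_{:,0}\,\sigma^0(z_0),
\end{equation*}
where the last equality is precisely the refinability relation \eqref{eq_refinable} applied at $t = z_0$. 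This is the heart of the argument: refinability is exactly what collapses the $\nA$ split contributions back into the single term $W^1_{:,0}\sigma^0(z_0)$ that the original network produces.

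Finally I would combine this with the carried-over neurons. Their contribution to $\overline{W}^1\overline{L}^0(x)$ is $\sum_{i=1}^{n_1-1}\overline{W}^1_{:,i+\nA-1}\sigma^0(z_i) = \sum_{i=1}^{n_1-1}W^1_{:,i}\sigma^0(z_i)$, matching the original. Adding both parts, the full pre-activation of the output layer is $\sum_{i=0}^{n_1-1}W^1_{:,i}\sigma^0(z_i) = W^1\sigma^0(z) = W^1 L^0(x)$, so that $\overline{L}^1(\overline{L}^0(x)) = \sigma^1(W^1 L^0(x) + b^1) = L^1(L^0(x))$. I expect no serious obstacle; the only point requiring care is bookkeeping the index shift $i\mapsto i+\nA-1$ and confirming that the bias of the output layer $b^1$ is unchanged (which it is, since subdividing a neuron affects only the columns of $W^1$ feeding into the output, not the output biases). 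The argument holds for arbitrary $x$, giving the claimed operator identity.
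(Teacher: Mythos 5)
Your proposal is correct and follows essentially the same argument as the paper's proof: both apply the refinability relation \eqref{eq_refinable} at the pre-activation value $z_0 = W^0_{0,:}x + b^0_0$ of the split neuron to collapse the $\nA$ new contributions back into $W^1_{:,0}\sigma^0(z_0)$, carry the remaining neurons through the index shift $i\mapsto i+\nA-1$ unchanged, and note that $b^1$ is untouched. The only difference is cosmetic---you expand $\overline{L}^1\circ\overline{L}^0$ and reduce it to $L^1\circ L^0$, whereas the paper expands in the opposite direction---so the two proofs coincide step for step.
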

\begin{proof}
    First, we show that a part of $L^{0}$ is independent of the zero neuron (the one being subdivided), and that the refinability property \eqref{eq_refinable} can be used in $[L^{0}(x)]_0$:
    \begin{align*}
        [L^{0}(x)]_i &= \sigma^0\left(W^{0}_{i,:}x + b^{0}_i\right) = \sigma^0\left(\overline{W}^{0}_{i+\nA-1,:}x + \overline{b}^{0}_{i+\nA-1}\right) = [\overline{L}^{0}(x)]_{i+\nA-1}, \qquad i= 1, 2, \ldots, n_1-1, \\
        [L^{0}(x)]_0 &= \sigma^0\left(W^{0}_{0,:}x + b^{0}_0\right)
        \overset{\eqref{eq_refinable}}{=} \sum_{l=0}^{\nA-1} a_l \sigma^0(\arity W^{0}_{0,:}x + \arity b^{0}_0 - l + \tau) 
        = \sum_{l=0}^{\nA-1} a_l \sigma^0(\overline{W}^{0}_{l,:}x + \overline{b}^{0}_l)
        = \sum_{l=0}^{\nA-1} a_l \cdot [\overline{L}^{0}(x)]_l,\\
        \Rightarrow & \quad W^1_{:,0} \cdot [L^{0}(x)]_0 = \sum_{l=0}^{\nA-1} a_l W^1_{:,0} \cdot [\overline{L}^{0}(x)]_l = \sum_{l=0}^{\nA-1} \overline{W}^1_{:,l} \cdot  [\overline{L}^{0}(x)]_l.
    \end{align*}
    Now, we conveniently separate the zero neuron (that is being split) from the rest in the definition of $L^1$:
    \begin{align*}
        L^1(x) &= \sigma^1\left(\sum_{i=1}^{n_{1}-1} W^1_{:,i} x_i + W^1_{:,0} \cdot x_0 + b^1\right), \\
        \Rightarrow L^1(L^{0}(x)) &= \sigma^1\left(\sum_{i=1}^{n_{1}-1} W^1_{:,i} \cdot [L^{0}(x)]_i + W^1_{:,0} \cdot [L^{0}(x)]_0 + b^1\right) \\
        &= \sigma^1\left(\sum_{i=1}^{n_1-1} \overline{W}^1_{:,i+\nA-1} \cdot [\overline{L}^{0}(x)]_{i+\nA-1} + \sum_{l=0}^{\nA-1} \overline{W}^1_{:,l} \cdot [\overline{L}^{0}(x)]_l + b^1\right)\\
        &= \sigma^1\left(\sum_{i=\nA}^{n_1+\nA-2} \overline{W}^1_{:,i} \cdot [\overline{L}^{0}(x)]_{i} + \sum_{l=0}^{\nA-1} \overline{W}^1_{:,l} \cdot [\overline{L}^{0}(x)]_l + b^1\right)\\
        &= \sigma^1\left( \sum_{i=0}^{n_1+\nA-2} \overline{W}^1_{:,i} \cdot [\overline{L}^{0}(x)]_{i} + b^1\right) = \overline{L}^1(\overline{L}^{0}(x)).
    \end{align*}
\end{proof}

\begin{rmk}
    Theorem \ref{thm_increase_neurons} can be applied to every neuron, resulting in the multiplication of the number of neurons in the layer by $\nA$. In that case, the weights and biases are updated as follows:
    \begin{alignat*}{4}
        &\overline{W}^{0}_{\nA i+l,:} = \arity \overline{W}^{0}_{i,:}, \quad && \overline{b}^{0}_{\nA i+l} = \arity b^{0}_i+ \tau - l , \quad &&
        \overline{W}^1_{:,\nA i+l} = a_{l} \overline{W}^1_{:,i}, \quad & i=0,\ldots,n_1-1,\quad l=0,\ldots,\nA -1.
    \end{alignat*}
    Observe that this operation can be repeated as many times as desired, arbitrarily increasing the number of neurons in the layer.
\end{rmk}

Algorithm \ref{alg_grow_layer} in \ref{algorithms} shows how Theorem \ref{thm_increase_neurons} can be used to widen a layer while preserving the NN function.

\subsection{Inserting new layers by summing the identity} \label{sec_insert_layer}

In this section, it is demonstrated that new layers can be inserted into a neural network without altering the outcomes, provided that an activation function that sums the identity is used.

Let us consider a NN with two consecutive layers of neurons with $n_0,n_1$ neurons. Let $L$ be the layer operator connecting them:
$$L : \R^{n_0} \to \R^{n_1}, \quad L(x) = \sigma(Wx + b), \quad W\in\R^{n_1\times n_0}, \quad b\in\R^{n_1},$$
where $\sigma$ is an activation function that may not sum the identity.
We are interested in inserting a new layer with $\bar{n}$ neurons between them. 
This implies splitting the layer operator $L$ into two separate operators:
\begin{align*}
    L^0 &: \R^{n_0} \to \R^{\bar n}, \quad L^0(x) = \sigma^0(W^0 x + b^0), \quad W^0\in\R^{\bar n\times n_0}, \quad b^0\in\R^{\bar n},\\
    L^1 &: \R^{\bar n} \to \R^{n_1}, \quad L^1(x) = \sigma(W^1 x + b^1), \quad W^1\in\R^{n_1\times \bar n}, \quad b^1\in\R^{n_1},
\end{align*}
where $\sigma^0$ must sum the identity. Choosing $W^0,W^1,b^0,b^1$ according to Theorems \ref{thm_insert_layer_pre} or \ref{thm_insert_layer_post}, we can ensure that
\begin{equation}\label{eq_insert_layer}
    L(x) = (L_1 \circ L_0)(x), \quad \forall x\in \Omega \subset \R^{n_0},
\end{equation}
for some set $\Omega$, which depends on the interval $I$ where the identity is summed (see Definition \ref{defi_sum_identity}). $\Omega$ can be made arbitrarily large to ensure that \eqref{eq_insert_layer} holds for all data in the training and test sets, as discussed in Remarks \ref{rmk_choose_beta_1} and \ref{rmk_choose_beta_2}. Consequently, the outcomes of the NN remain unchanged for the given data and their surroundings.

The effect of this separation is the insertion of a new layer with $\bar{n}$ neurons between the layer with $n_0$ neurons and the layer with $n_1$ neurons. This can be accomplished in two ways: one with $\bar{n} = B n_0$ (Theorem \ref{thm_insert_layer_pre}) and the other with $\bar{n} = B n_1$ (Theorem \ref{thm_insert_layer_post}), where $B\in\mathbb{N}$ is determined by the identity summing property (see Definition \ref{defi_sum_identity}). Hence, the first one might be convenient when the number of neurons in the previous layer is smaller than the number of neurons in the next layer ($n_0\leq n_1$), while the second one might be preferable in the opposite case. The new weights and biases are defined according to these theorems.

\begin{thm} \label{thm_insert_layer_pre}
    For any $W\in\R^{n_1\times n_0}$, $b\in\R^{n_1}$, $\beta>0$ and any $\sigma^0$ summing the identity, we have that  $L(x) = (L_1 \circ L_0)(x)$, $\forall x\in \Omega \subset \R^{n_0}$ holds for $\bar n = B n_0$ and
    \begin{align*}
        \Omega &:= \{x\in\R^{n_0} : \beta x_i \in I, \ i = 0,\ldots,n_0-1\}, \\
        W^0_{l+iB,:} &:= \beta (e^i)^T, \quad
        b^0_{l+iB} := \mu - l,\quad
        W^1_{:,l+iB} := \frac1\beta W_{:,i},\quad
        b^1 := b,
    \end{align*}
    for $i = 0,\ldots,n_0-1$, $l = 0,\ldots,B-1$, and $e^i$ is the $i$-th vector of the canonical basis of $\R^{n_0}$.
\end{thm}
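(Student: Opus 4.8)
The plan is to verify the identity $L(x) = (L^1 \circ L^0)(x)$ by direct computation: I would evaluate $L^0(x)$ componentwise, substitute the result into $L^1$, and then let the sum-the-identity property of $\sigma^0$ do the essential work of reconstructing $Wx$. The only genuine content is the final appeal to \eqref{eq_identity}, which is precisely why the conclusion is restricted to the set $\Omega$.

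First I would compute the pre-activation of $L^0$. By the chosen $W^0$ and $b^0$, the $(l+iB)$-th entry of $W^0 x + b^0$ equals $\beta (e^i)^T x + \mu - l = \beta x_i + \mu - l$, so that
\[
    [L^0(x)]_{l+iB} = \sigma^0(\beta x_i + \mu - l), \qquad i = 0,\ldots,n_0-1,\ \ l = 0,\ldots,B-1.
\]
Here I would record that the map $j \mapsto (i,l)$ with $j = l + iB$ is a bijection from $\{0,\ldots,Bn_0-1\}$ onto $\{0,\ldots,n_0-1\}\times\{0,\ldots,B-1\}$; this legitimizes splitting any sum over the $\bar n = B n_0$ neurons of the inserted layer into a double sum over $i$ and $l$.

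Next I would substitute into $L^1$. Expanding $W^1 L^0(x) = \sum_{j} W^1_{:,j}\,[L^0(x)]_j$ via the reindexing and using $W^1_{:,l+iB} = \tfrac1\beta W_{:,i}$ gives
\[
    W^1 L^0(x) = \sum_{i=0}^{n_0-1} \frac1\beta\, W_{:,i} \sum_{l=0}^{B-1} \sigma^0(\beta x_i + \mu - l).
\]
The key step is to apply \eqref{eq_identity} with $t = \beta x_i$. This is exactly where the hypothesis $x\in\Omega$ enters: by definition $\Omega$ consists of those $x$ with $\beta x_i \in I$ for every $i$, which is the admissible range in \eqref{eq_identity}. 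The inner sum therefore collapses to $\beta x_i$, the factors of $\beta$ cancel, and I obtain $W^1 L^0(x) = \sum_i W_{:,i} x_i = Wx$. Finally, since $b^1 = b$, I conclude $(L^1\circ L^0)(x) = \sigma(Wx+b) = L(x)$.

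I expect no real obstacle: the computation is routine, and the main points to watch are keeping the index $l+iB$ consistent throughout the reindexing and recognizing that the restriction to $\Omega$ is not a limitation but simply the natural domain on which the telescoping sum-the-identity relation holds.
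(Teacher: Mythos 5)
Your proposal is correct and is essentially the paper's own proof run in the reverse direction: the paper starts from $L(x)$, inserts the sum-the-identity expansion of each $x_i$, and reassembles the result into $(L^1\circ L^0)(x)$, whereas you expand $(L^1\circ L^0)(x)$ and collapse it to $L(x)$ — the same reindexing $j = l+iB$, the same application of \eqref{eq_identity} at $t=\beta x_i$ justified by $x\in\Omega$, and the same cancellation of $\beta$. The only cosmetic slip is calling the sum-the-identity relation ``telescoping,'' which it is not, but this does not affect the argument.
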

\begin{proof}
    Let $x\in\Omega$ be.
    First, we separate the contribution of each neuron in the layer operator $L$:
    \begin{align*}
        L(x) &= \sigma(Wx + b) = \sigma\left(\sum_{i = 0}^{n_0-1} W_{:,i}x_i + b\right).
    \end{align*}
    We use that $\sigma$ sums the identity as follows:
    \begin{align*}
        x_i = \frac1\beta \beta x_i \overset{\eqref{defi_sum_identity}}{=} \frac1\beta \sum_{l=0}^{B-1} \sigma^0(\beta x_i + \mu - l),
    \end{align*}
    which holds because $\beta x_i \in I$ by hypothesis.
    Now, we can write
    \begin{align*}
        L(x) &= \sigma\left(\sum_{i = 0}^{n_0-1} W_{:,i}\frac1\beta \sum_{l=0}^{B-1} \sigma^0(\beta x_i + \mu - l) + b\right)
        = \sigma\left(\sum_{i = 0}^{n_0-1} \sum_{l=0}^{B-1} \beta^{-1}W_{:,i} \sigma^0(\beta x_i + \mu - l) + b\right) \\
        &= \sigma\left(\sum_{i = 0}^{n_0-1} \sum_{l=0}^{B-1} W^1_{:,l+iB} \sigma^0(\beta x_i + \mu - l) + b\right).
    \end{align*}
    Observe that
    \[
    \sigma(\beta x_i + \mu - l) = \sigma(\beta (e^i)^T x + \mu - l) = \sigma(W^0_{l+iB,:}x + b^0_{l+iB}) = [L^0(x)]_{l+iB}.
    \]
    Then,
    \begin{align*}
        L(x) &= \sigma\left(\sum_{i = 0}^{n_0-1} \sum_{l=0}^{B-1} W^1_{:,l+iB} [L^0(x)]_{l+iB} + b\right) = \sigma\left(\sum_{i = 0}^{B n_0-1} W^1_{:,i} [L^0(x)]_i + b\right) = (L^1 \circ L^0)(x).
    \end{align*}
\end{proof}

\begin{rmk} \label{rmk_choose_beta_1}
    If practice, it is desirable that the NN remains invariable under this layer-addition operation for some set of data. If $0$ is in the interior of $I$ (as demanded in Definition \ref{defi_sum_identity}), we can choose $\beta$ sufficiently small to guarantee this: Let be $\delta>0$ such that $(-\delta, \delta)\subset I$. If $L$ was processing some vectors $x$ belonging to some set $D\subset\R^{n_0}$, we can choose
$$\beta := \frac{\delta}{2\sup\{ |y_j| \ : \ j\in\{0,\ldots,n_0-1\}, \ y\in D\}},$$
    since it implies that
$$|\beta x_i| = \frac{\delta |x_i|}{2 \sup\{ |y_j| \ : \ j\in\{0,\ldots,n_0-1\}, \ y\in D\} }< \delta$$
    and then $\beta x_i \in I$ for all $i=0,\ldots,n_0-1$, $x\in D$. That is, $D\subset \Omega$.
\end{rmk}

\begin{thm} \label{thm_insert_layer_post}
    For any $W\in\R^{n_1\times n_0}$, $b\in\R^{n_1}$, $\beta>0$ and any $\sigma^0$ summing the identity, we have that \eqref{eq_insert_layer} holds for $\bar n = B n_1$ and
    \begin{align*}
        \Omega &:= \{x\in\R^{n_0} : \beta(W_{i,:} x + b_i) \in I, \ i = 0,\ldots,n_1-1\}, \\
        W^0_{i+l n_1,:} &:= \beta W_{i,:}, \quad
        b^0_{i+l n_1} := \beta b_i + \mu - l,\quad
        W^1_{i,:} := \beta^{-1} \sum_{l=0}^{\nB-1}(e^{i+l n_1})^T,\quad
        b^1 := 0,
    \end{align*}
    for $i = 0,\ldots,n_1-1$, $l = 0,\ldots,\nB-1$, and $e^i$ is the $i$-th vector of the canonical basis of $\R^{\nB n_1}$.
\end{thm}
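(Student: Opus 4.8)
The plan is to mirror the proof of Theorem \ref{thm_insert_layer_pre}, but to apply the identity-summing property \eqref{eq_identity} to the pre-activations $W_{i,:}x + b_i$ of the original layer rather than to the input coordinates $x_i$. The guiding idea is that the new first layer $L^0$ produces, for each original output index $i$, exactly $B$ copies of $\sigma^0$ evaluated at shifted versions of $\beta(W_{i,:}x + b_i)$, and that the new second layer $L^1$ is rigged to add these $B$ copies back up, recovering $W_{i,:}x+b_i$ before $\sigma$ is applied.

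First I would substitute the definitions of $W^0$ and $b^0$ into $L^0$ to obtain, for $i = 0,\ldots,n_1-1$ and $l = 0,\ldots,B-1$,
\[
[L^0(x)]_{i+ln_1} = \sigma^0\bigl(\beta W_{i,:}x + \beta b_i + \mu - l\bigr) = \sigma^0\bigl(\beta(W_{i,:}x + b_i) + \mu - l\bigr).
\]
As $i$ and $l$ range over their indices, $i + l n_1$ runs bijectively over $\{0,\ldots,Bn_1-1\}$, so this accounts for every component of $L^0(x)$. Next I would compute the $i$-th component of $W^1 L^0(x)$: since the $i$-th row $W^1_{i,:} = \beta^{-1}\sum_{l=0}^{B-1}(e^{i+ln_1})^T$ selects precisely the $B$ copies indexed by $l$, I get
\[
\bigl(W^1 L^0(x)\bigr)_i = \beta^{-1}\sum_{l=0}^{B-1} [L^0(x)]_{i+ln_1} = \beta^{-1}\sum_{l=0}^{B-1}\sigma^0\bigl(\beta(W_{i,:}x + b_i) + \mu - l\bigr).
\]

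The single real input to the argument comes now: setting $t_i := \beta(W_{i,:}x + b_i)$, the condition $x\in\Omega$ is exactly the statement $t_i \in I$, so the identity-summing property \eqref{eq_identity} applies and yields $\sum_{l=0}^{B-1}\sigma^0(t_i + \mu - l) = t_i$. Hence $\bigl(W^1 L^0(x)\bigr)_i = \beta^{-1} t_i = W_{i,:}x + b_i = (Wx+b)_i$, so $W^1 L^0(x) = Wx + b$. Finally, since $b^1 = 0$, I conclude
\[
L^1(L^0(x)) = \sigma\bigl(W^1 L^0(x) + b^1\bigr) = \sigma(Wx+b) = L(x), \qquad \forall x\in\Omega,
\]
which is \eqref{eq_insert_layer}.

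I do not expect a genuine obstacle here: all the content lies in the choice of $W^1$ that undoes the split, and the only points requiring care are the index bookkeeping (checking that $i+ln_1$ enumerates $\{0,\ldots,Bn_1-1\}$, so that the row sums in $W^1$ line up with the correct blocks of $L^0$) and the confirmation that membership in $\Omega$ is precisely what licenses the single use of \eqref{eq_identity}. The contrast with Theorem \ref{thm_insert_layer_pre} is that the affine part $W_{i,:}x+b_i$ is now absorbed into the biases of $L^0$ via $\beta b_i + \mu - l$, whereas there the weights $W_{:,i}$ were carried over into $W^1$; this is what trades the dimension $\bar n = Bn_0$ for $\bar n = Bn_1$.
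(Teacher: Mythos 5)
Your proof is correct and is essentially the paper's own argument: both rest on identifying $[L^0(x)]_{i+ln_1}=\sigma^0(\beta(W_{i,:}x+b_i)+\mu-l)$, using the row structure of $W^1$ to sum the $B$ copies, and invoking \eqref{eq_identity} with $t=\beta(W_{i,:}x+b_i)\in I$, which is exactly what $x\in\Omega$ guarantees. The only difference is direction of travel (you collapse $L^1\circ L^0$ down to $L$, while the paper expands $[L(x)]_i$ into $[L^1(L^0(x))]_i$), which is immaterial.
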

\begin{proof}
    Let $x\in \Omega$ and $i\in\{0,\ldots,n_1-1\}$ be. Using that $\beta(W_{i,:}x + b_i)\in I$ and the sum of the identity property, we have that
    \begin{align*}
        [L(x)]_i &= \sigma(W_{i,:}x + b_i) = \sigma(\beta^{-1} \beta(W_{i,:}x + b_i))
        \overset{\eqref{eq_identity}}{=} \sigma\left(\beta^{-1} \sum_{l=0}^{\nB-1} \sigma^0(\beta(W_{i,:}x + b_i) + \mu - l)\right) \\
        &= \sigma\left(\sum_{l=0}^{\nB-1} \beta^{-1} \sigma^0(W^0_{i+l n_1,:} x + b^0_{i+l n_1})\right)
        = \sigma\left(\beta^{-1} \sum_{l=0}^{\nB-1}(e^{i+l n_1})^T \sigma^0(W^0 x + b^0)\right)\\
        &= \sigma\left(W^1_{i,:} L^0(x)\right) = [L^1(L^0(x))]_i.
    \end{align*}
\end{proof}

\begin{rmk} \label{rmk_choose_beta_2}
    As in Remark \ref{rmk_choose_beta_1}, we can choose $\beta$ sufficiently small to guarantee that $D\subset \Omega$, where $D$ is the set of data that was being processed by $L$.
    
    Let $\delta>0$ be such that $(-\delta, \delta)\subset I$. We select
    $$\beta := \frac{\delta}{2\sup\{ |(W_{i,:} y + b_i)| \ : \ i\in\{0,\ldots,n_1-1\}, \ y\in D\}}.$$
    For all $x\in D$, it implies that
    $$|\beta(W_{i,:} x + b_i)| = \frac{\delta |W_{i,:} x + b_i|}{2 \sup\{ |(W_{j,:} y + b_j)| \ : \ j\in\{0,\ldots,B n_1-1\}, \ y\in D\} }< \delta$$
    and then $\beta(W_{i,:} x + b_i) \in I$ for all $i=0,\ldots,B n_1-1$. That is, $D\subset \Omega$.
\end{rmk}

The implementations of two methods that insert a new layer into a NN are provided in Algorithms \ref{alg_insert_layer_pre} and \ref{alg_insert_layer_post} in \ref{algorithms}, which use Theorems \ref{thm_insert_layer_pre} and \ref{thm_insert_layer_post}, respectively.

\section{Refinable activation functions that sum the identity}

In this section, a method for constructing activation functions $\sigma:\R\to\R$ that are non-decreasing, continuous, refinable and sum the identity is shown. Before discussing the general results based on subdivision theory, the particular case of B-Splines is introduced.

\subsection{Spline activation functions} \label{sec_bsplines}

Let us consider the B-Splines basis functions defined recursively by
\begin{align}\label{eq_bsplines}
    \phi_{B^0}(t) &= \begin{cases}
        1 & \text{if }0\leq t< 1,\\
        0 & \text{otherwise},
    \end{cases} \quad
    \phi_{B^d}(t) = (\phi_{B^{d-1}}*\phi_{B^{0}})(t) = \int_{t-1}^t \phi_{B^{d-1}}(s)ds, & d\geq 1,
\end{align}
where $*$ denotes the convolution product. In particular, it can be computed (see the Reproducibility section) that
\begin{equation*}
    \phi_{B^1}(t) = \max\{0,1-|t-1|\}, \quad \phi_{B^2}(t) = \begin{cases}
        t^2/2, & \text{if }0< t\leq 1,\\
        \frac{3}{4}-(t-\frac32)^2, & \text{if }1< t\leq 2,\\
        \frac{1}{2} (3-t)^2 & \text{if }2< t\leq 3,\\
        0 & \text{otherwise}.
    \end{cases}
\end{equation*}
A more direct definition is
\begin{equation}\label{eq_bsplines_direct}
    \phi_{B^d}(t) = 
    \sum_{l=0}^{d+1} \frac{(-1)^{l}}{d!}\binom{d+1}{l} \max\{t-l,0\}^{d}, \quad d\geq 1.
\end{equation}
For a brief overview of B-Splines, refer to \cite{Dyn92}, and for a more comprehensive understanding, consult \cite{DeBoor78}. What is significant for our purposes is that $\phi_{B^{\bd}}$ is refinable: 
\begin{equation}\label{eq_bsplines_refinable}
    \phi_{B^{\bd}}(t) = \sum_{l=0}^{\bd+1} 2^{-\bd}\binom{\bd+1}{l}\phi_{B^\bd}(2t-l).
\end{equation}
Another properties to take into account is that $\phi_{B^{\bd}}$ is $\cC^{\bd-1}$, its support is $(0,\bd+1)$ and (according to IX-(v) in \cite{DeBoor78})
\begin{equation}\label{eq_bsplines_identity}
    1 = \sum_{i\in\Z} \phi_{B^{\bd}}(t-i), \quad t-\frac{\bd+1}2 = \sum_{i\in\Z} i \phi_{B^{\bd}}(t-i).
\end{equation}
The equations in \eqref{eq_bsplines_identity} are related to the \emph{generation of first-degree polynomials} property in subdivision theory and are crucial for the construction of activation functions that sum the identity, as we will see.

For a given refinable function, the associated activation function can be defined as follows.
\begin{defi} \label{defi_sigma_from_phi}
    Let $\phi:\R\to\R$ be a refinable function with support contained in $(0,\bd+1)$. The activation function $\sigma:\R\to\R$ associated to $\phi$ is defined by
    \begin{equation} \label{eq_sigma_from_phi}
        \sigma(t) = -\frac12 + \sum_{m=0}^{\infty} \phi\left(t+\frac{\bd}2-m\right).
    \end{equation}
    For $\phi = \phi_{B^\bd}$, we denote $\sigma = \sigma_{B^\bd}$ and we call it the \emph{spline activation function} of degree $d$.
\end{defi}

Now, the main properties of the spline activation function are enumerated. Some of these properties are inherited from the properties of B-Splines, while others are proven using the more general result presented in Theorem \ref{thm_sigma_properties}. The proof can be found in Section \ref{sec_subdivision}.
\begin{thm} \label{thm_sigmaB_properties}
    For any $\bd\in\N$, the activation function $\sigma_{B^\bd}$ fulfils
    \begin{enumerate}[label=(\alph*)]
        \item \label{thm_sigmaB_properties_support} $\sigma_{B^\bd}(t) = -1/2$, if $t\leq -\bd/2$, $\sigma_{B^\bd}(t) = 1/2$, if $\bd/2\leq t$, and $\sigma_{B^\bd}(t) = -\frac12 + \sum_{m=0}^{d-1} \phi_{B^\bd}(t+\frac{\bd}2-m)$, if $-\bd/2\leq t\leq \bd/2$.
        \item \label{thm_sigmaB_properties_smooth}\label{thm_sigmaB_properties_symmetric} It is $\cC^{\bd-1}$ and odd-symmetric.
        \item \label{thm_sigmaB_properties_refinable} It is refinable with $\tau = d/2$, $\nA = \bd+1$ and $a_l = 2^{-\bd}\binom{\bd}{l}$.
        \item \label{thm_sigmaB_properties_sum} For any $B\geq d$, it sums the identity in the interval $I = [-\frac{B-d+1}2,\frac{B-d+1}2]$ with $\mu = \frac{B-1}2$.
        \item \label{thm_sigmaB_properties_expression} $\sigma_{B^\bd}(t) = -\frac12 + \frac1{\bd!}\sum_{l=0}^{\bd} (-1)^l \binom{\bd}{l} \max\{t+d/2-l,0\}^\bd.$
        \item \label{thm_sigmaB_properties_recursive} It fulfils the recursive formula
        $\sigma_{B^{\bd+1}}(t) = \int_{t-1/2}^{t+1/2}\sigma_{B^{\bd}}(s) ds = (U*\sigma_{B^{\bd}})(t),$ where $U$ is the box function: $U(t) = 1$ if $|t|< 1/2$ and $U(t) = 0$ otherwise.
        \item \label{thm_sigmaB_properties_derivative} $\sigma_{B^\bd}'(t) = \phi_{B^{\bd-1}}(t+\frac{d}2) \geq 0$.
    \end{enumerate}
\end{thm}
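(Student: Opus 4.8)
The plan is to reduce every item to a known property of the B-spline $\phi_{B^d}$ from \eqref{eq_bsplines}--\eqref{eq_bsplines_identity}, and to establish the boundary behaviour (a) and the derivative formula (g) first, since the other items lean on them. Throughout I write $\sigma=\sigma_{B^d}$ and use that $\supp \phi_{B^d}\subset(0,d+1)$ and that $\phi_{B^d}$ is a partition of unity, $\sum_{i\in\Z}\phi_{B^d}(t-i)=1$, from \eqref{eq_bsplines_identity}. For (a), in \eqref{eq_sigma_from_phi} the summand $\phi_{B^d}(t+d/2-m)$ is nonzero only when $0<t+d/2-m<d+1$: for $t\le -d/2$ every argument is $\le 0$ and the sum is empty, giving $-1/2$; for $-d/2\le t\le d/2$ only $m=0,\ldots,d-1$ survive; and for $t\ge d/2$ the terms with $m<0$ have argument $\ge d+1$, so the partial sum over $m\ge 0$ already equals the full partition-of-unity value $1$, giving $1/2$. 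For (g), I differentiate \eqref{eq_sigma_from_phi} and insert the standard identity $\phi_{B^d}'(s)=\phi_{B^{d-1}}(s)-\phi_{B^{d-1}}(s-1)$ (immediate from $\phi_{B^d}=\phi_{B^{d-1}}*\phi_{B^0}$); the resulting series telescopes, leaving $\phi_{B^{d-1}}(t+d/2)\ge 0$.

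Item (b) then splits in two: smoothness holds because on any bounded set the series in \eqref{eq_sigma_from_phi} is finite, so $\sigma$ inherits the class $\cC^{d-1}$ of $\phi_{B^d}$; oddness follows from the B-spline symmetry $\phi_{B^d}(s)=\phi_{B^d}(d+1-s)$ together with partition of unity, which rewrites $\sum_{m\ge 0}\phi_{B^d}(-t+d/2-m)$ as the complementary tail $\sum_{m<0}\phi_{B^d}(t+d/2-m)$, so that $(\sigma(-t)+\tfrac12)+(\sigma(t)+\tfrac12)=1$, i.e. $\sigma(-t)=-\sigma(t)$.

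The two analytically interesting items are (c) and (d). For refinability (c) the obstacle is that the proposed mask $a_l=2^{-d}\binom{d}{l}$, $l=0,\ldots,d$, is \emph{not} the refinement mask $2^{-d}\binom{d+1}{l}$ of $\phi_{B^d}$ in \eqref{eq_bsplines_refinable}, so one cannot substitute \eqref{eq_bsplines_refinable} termwise into the series. My plan is to prove the refinement equation by differentiation: using (g) and the chain rule, the derivative of $\sum_l a_l\,\sigma(2t+d/2-l)$ equals $\sum_l 2^{1-d}\binom{d}{l}\phi_{B^{d-1}}\!\big(2(t+d/2)-l\big)$, which is exactly the right-hand side of the refinement relation \eqref{eq_bsplines_refinable} for $\phi_{B^{d-1}}$ evaluated at $t+d/2$, hence equals $\sigma'(t)$. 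Thus both sides of the claimed identity have the same derivative and differ by a constant, which is pinned to $0$ by letting $t\to-\infty$ and using $\sum_l\binom{d}{l}=2^d$. For the identity-summing (d), I set $S(t):=\sum_{l=0}^{B-1}\sigma(t+\mu-l)$ with $\mu=(B-1)/2$ and show $S=\mathrm{id}$ on $I$ via the fundamental theorem of calculus: by (g), $S'(t)=\sum_{l=0}^{B-1}\phi_{B^{d-1}}(u-l)$ with $u=t+(B+d-1)/2$, and the bookkeeping that makes this work is that, precisely for $t\in I$ (equivalently $u\in[d-1,B]$), the boundary indices $l=-1$ and $l=B$ give arguments outside $\supp\phi_{B^{d-1}}=(0,d)$, so the partial sum coincides with the partition-of-unity value $1$. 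Since $\sigma$ is odd, the symmetric nodes cancel in pairs to give $S(0)=0$, whence $S(t)=t$ on $I$.

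Finally, (e) and (f) are direct computations. For (e) I insert the closed form \eqref{eq_bsplines_direct} into \eqref{eq_sigma_from_phi}, regroup the double sum by $k=m+j$, and use the partial alternating-binomial identity $\sum_{j=0}^{k}(-1)^j\binom{d+1}{j}=(-1)^k\binom{d}{k}$, which vanishes once $k\ge d+1$; this collapses the coefficient of $\max\{t+d/2-k,0\}^d$ to $\tfrac{1}{d!}(-1)^k\binom{d}{k}$ for $k\le d$ and to $0$ otherwise. For (f) I substitute \eqref{eq_sigma_from_phi} into $\int_{t-1/2}^{t+1/2}\sigma_{B^d}(s)\,ds$, exchange sum and integral, and observe that the recursion $\phi_{B^{d+1}}(s)=\int_{s-1}^{s}\phi_{B^d}(r)\,dr$ from \eqref{eq_bsplines} reproduces $\sigma_{B^{d+1}}$ once the shift $d/2\to(d+1)/2$ is matched, the constant $-1/2$ surviving because $U$ has unit mass. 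I expect the main obstacle to be item (c): the key is to recognize that one should differentiate rather than manipulate the defining series directly, so that the mask of $\sigma_{B^d}$ reduces to the (different) mask of $\phi_{B^{d-1}}$.
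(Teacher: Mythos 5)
Your proof is correct, but it takes a genuinely different route from the paper's. The paper obtains items (a)--(d) as corollaries of the general Theorem \ref{thm_sigma_properties}, valid for any convergent subdivision scheme: refinability (c) comes from the Laurent-polynomial factorization $\hat a(z)=(1+z)\hat b(z)$ combined with the technical Lemma \ref{lem:tecnico}, and the identity-summing property (d) from a combinatorial resummation of the series; then (e) is proved by induction on unit intervals using the difference relation $\sigma(t)-\sigma(t-1)=\phi\left(t+\frac{d}{2}\right)$, (f) by a shift-operator/convolution computation, and (g) only at the end, as a consequence of (f). You reverse the logical order and stay entirely inside spline theory: you prove (g) first by differentiating the defining series and telescoping with $\phi_{B^d}'=\phi_{B^{d-1}}-\phi_{B^{d-1}}(\cdot-1)$, then deduce (c) by differentiating the claimed refinement relation---the derivative of the right-hand side is exactly the refinement equation \eqref{eq_bsplines_refinable} of $\phi_{B^{d-1}}$, which is precisely why the binomial order drops from $\binom{d+1}{l}$ to $\binom{d}{l}$---and (d) by the fundamental theorem of calculus ($S'\equiv 1$ on $I$ via the partition of unity of $\phi_{B^{d-1}}$, with the support bookkeeping matching $I$ exactly, and $S(0)=0$ by oddness); your (e) regroups the double sum with the partial alternating-binomial identity instead of inducting. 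Your differentiation trick is an elegant elementary substitute for the paper's Laurent-series argument (indeed it is the same algebra in disguise: dividing $\hat a(z)$ by $(1+z)$ corresponds to differentiating the spline), but the paper's route buys generality: Theorem \ref{thm_sigma_properties} applies to refinable limit functions that need not be differentiable at all, whereas your arguments for (c), (d) and (g) rely on derivatives and therefore need a small patch at $d=1$, where $\sigma_{B^1}$ is only Lipschitz; there the derivative identities hold almost everywhere, and you must invoke absolute continuity of both sides (or verify \eqref{eq_introduction_refinable} directly) to conclude equality of the functions. That is a one-line fix, not a gap.
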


According to the last theorem, the derivative of $\sigma_{B^\bd}$ is known, which is crucial for applying gradient-based optimization algorithms during the training stage. However, to implement backpropagation, it is necessary to compute the derivative with respect to the function value. The following proposition deals with it for the cases $n=1,2$.
\begin{prop} \label{prop_backpropagation}
    \begin{align*}
        \sigma_{B^1}'(t) &= \begin{cases}
            1, & \text{if } |\sigma_{B^1}(t)| < \frac12,\\
            0, & \text{otherwise},
        \end{cases} \quad 
        \sigma_{B^2}'(t) = \begin{cases}
            \sqrt{1+2\sigma_{B^2}(t)}, & \text{if }-\frac12< \sigma_{B^2}(t)\leq 0,\\
            \sqrt{1-2\sigma_{B^2}(t)}, & \text{if }0< \sigma_{B^2}(t)< \frac12,\\
            0, & \text{otherwise}.
        \end{cases}
    \end{align*}
    Equivalently, $\sigma_{B^1}'(t) = U(\sigma_{B^1}(t))$ and $\sigma_{B^2}'(t) = \sqrt{\max\{0,1-2|\sigma_{B^2}(t)|\}}$.
\end{prop}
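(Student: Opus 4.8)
The plan is to start from the known derivative $\sigma_{B^\bd}'(t) = \phi_{B^{\bd-1}}(t+\bd/2)$ provided by Theorem \ref{thm_sigmaB_properties}\ref{thm_sigmaB_properties_derivative}, to rewrite this as an explicit elementary function of $t$, and then to eliminate $t$ by substituting the closed form of $\sigma_{B^\bd}$, so that the derivative is recovered purely from the output value $y:=\sigma_{B^\bd}(t)$. This is exactly the quantity needed to run backpropagation, where $t$ is not stored but $y$ is.

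For $\bd=1$, I would evaluate $\phi_{B^0}(t+\tfrac12)$ using the definition of $\phi_{B^0}$ in \eqref{eq_bsplines}: it equals $1$ on $[-\tfrac12,\tfrac12)$ and $0$ elsewhere. The piecewise formula \eqref{eq_sigma:bsplines} shows that $\sigma_{B^1}(t)=t$ precisely on $[-\tfrac12,\tfrac12]$ and saturates to $\pm\tfrac12$ outside, so $|\sigma_{B^1}(t)|<\tfrac12$ holds if and only if $t\in(-\tfrac12,\tfrac12)$. Matching the two descriptions yields the indicator form $\sigma_{B^1}'(t)=U(\sigma_{B^1}(t))$. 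At the kinks $t=\pm\tfrac12$ the function $\sigma_{B^1}$ is not differentiable, and the stated convention simply assigns the saturated value $0$ there; this affects only a measure-zero set and is immaterial for training.

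For $\bd=2$, I would first compute $\phi_{B^1}(t+1)=\max\{0,1-|t|\}$, so that $\sigma_{B^2}'(t)=\max\{0,1-|t|\}$, while $\sigma_{B^2}(t)=t(1-|t|/2)$ on $[-1,1]$ by \eqref{eq_sigma:bsplines}. The substantive step is to invert this relation on each half-interval. On $[0,1]$ I solve the quadratic $y=t-t^2/2$ and keep the root $t=1-\sqrt{1-2y}$ lying in $[0,1]$, whence $1-|t|=\sqrt{1-2y}$; on $[-1,0]$ I solve $y=t+t^2/2$, keep $t=-1+\sqrt{1+2y}$, and obtain $1-|t|=\sqrt{1+2y}$. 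Since $y>0$ on $(0,1)$ and $y<0$ on $(-1,0)$, these two branches are exactly the two cases in the statement, and they collapse into the single expression $\sqrt{1-2|y|}$; the outer $\max\{0,\cdot\}$ absorbs the region $|t|\ge1$, where $|y|=\tfrac12$ and the derivative vanishes.

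The only non-routine part is the quadratic inversion for $\bd=2$: choosing the correct root on each branch and checking the sign bookkeeping so that the two branches merge into $\sqrt{\max\{0,1-2|\sigma_{B^2}(t)|\}}$. Everything else is direct substitution into Theorem \ref{thm_sigmaB_properties}\ref{thm_sigmaB_properties_derivative} together with the closed forms already recorded, the one caveat being the conventional one-sided choice of derivative at the kinks of $\sigma_{B^1}$.
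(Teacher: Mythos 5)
Your proof is correct, and it follows a genuinely different (if closely related) route from the paper's. The paper uses Theorem \ref{thm_sigmaB_properties}-\ref{thm_sigmaB_properties_derivative} only to certify that $\sigma_{B^2}'>0$ on $(-1,1)$, hence that $\sigma_{B^2}:(-1,1)\to(-\frac12,\frac12)$ is bijective, and then invokes the inverse function theorem, $\sigma_{B^2}'(t) = 1/(\sigma_{B^2}^{-1})'(\sigma_{B^2}(t))$; it computes $\sigma_{B^2}^{-1}$ explicitly (the same quadratic inversion and root selection you perform), differentiates the inverse, and takes a reciprocal. You instead use part \ref{thm_sigmaB_properties_derivative} for its exact value, $\sigma_{B^2}'(t)=\phi_{B^1}(t+1)=\max\{0,1-|t|\}$, and then eliminate $t$ by substituting $t=\sigma_{B^2}^{-1}(y)$, so that $1-|t|$ becomes $\sqrt{1+2y}$ or $\sqrt{1-2y}$ on the respective branches. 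The substantive computation---solving $y=t\mp t^2/2$ and keeping the root lying in the correct interval---is identical in both arguments, but yours dispenses with the inverse function theorem and with differentiating the inverse, so it is somewhat more elementary and makes the sign bookkeeping more transparent. What the paper's formulation buys in exchange is a clean diagnosis of the obstruction for higher degrees: the argument reduces entirely to computing $\sigma_{B^d}^{-1}$, which for $d>2$ requires solving a degree-$d$ polynomial equation, the difficulty flagged in the conclusions; your route hits exactly the same wall, since the substitution step also needs the inverse in closed form. Your treatment of the kinks of $\sigma_{B^1}$ at $t=\pm\frac12$ (a conventional assignment on a null set, with the derivative well defined elsewhere) matches the paper's own remark.
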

\begin{proof}
    For any $d\in\N$, by Theorem \ref{thm_sigmaB_properties}-\ref{thm_sigmaB_properties_support}, we know that $\sigma_{B^d}'(t) = 0$ if $|t|\geq\frac{d}2$, that is, if $|\sigma_{B^d}(t)| = \frac12$.  For $d=1$, it is important to note that the function is not differentiable at $t=-1/2$ and $t=1/2$, but the derivative can be computed in the rest of the domain. $\sigma_{B^1}'(t) = 1$, if $|\sigma_{B^1}(t)| < \frac12$, follows from \eqref{eq_sigma:bsplines}.
    
    For $d\geq 2$, the function is differentiable everywhere, but the result is not so direct. By Theorem \ref{thm_sigmaB_properties}-\ref{thm_sigmaB_properties_derivative}, we know that $\sigma_{B^d}'(t)>0$ for $t\in(-\frac{d}2,\frac{d}2)$, thus $\sigma_{B^d}:(-\frac{d}2,\frac{d}2)\to(-\frac12,\frac12)$ is bijective. Using the inverse function theorem (for univariate functions), we know that
    \[
    \sigma_{B^d}'(t) = \frac{1}{(\sigma_{B^d}^{-1})'(\sigma_{B^d}(t))}, \quad |t|<\frac{d}2.
    \]
    
    However, it is not easy to compute the inverse, in general, since it involve solving a polynomial equation of degree $d$. We can compute it for $d=2$: By \eqref{eq_sigma:bsplines},
    \[
    \sigma_{B^2}(t) = \begin{cases}
        t(1+t/2), & \text{if } -1< t\leq 0,\\
        t(1-t/2), & \text{if } 0< t< 1.
    \end{cases}
    \]
    For $t\in(-1,0]$, we can solve $z = t(1 + t/2)\in(-\frac12,0]$, whose solutions are $t = -1\pm\sqrt{1+2 z}$. Since $\sqrt{1+2 z}\in(0,1]$, then $-1+\sqrt{1+2 z}\in(-1,0]$ and $-1-\sqrt{1+2 z}\in[-2,-1)$. In conclusion, the sign must be positive and $\sigma_{B^2}^{-1}(z) = -1+\sqrt{1+2 z}$ for $z\in(-\frac12,0]$. Analogously, we can deduce that $\sigma_{B^2}^{-1}(z) = 1-\sqrt{1-2z}$ for $z\in[0,\frac12)$. See the Reproducibility section for details. Summarizing, we have that
    \[
    \sigma_{B^2}^{-1}(z) = \begin{cases}
        -1+\sqrt{1+2z}, & \text{if } -\frac12 < z\leq 0,\\
        1-\sqrt{1-2z}, & \text{if } 0< z< \frac12.
    \end{cases}
    \]
    Now we compute the derivative of the inverse function
    \begin{align*}
        (\sigma_{B^2}^{-1})'(z) &= \begin{cases}
            \frac{1}{\sqrt{1+2z}}, & \text{if } -\frac12 < z\leq 0,\\
            \frac{1}{\sqrt{1-2z}}, & \text{if } 0< z< \frac12,
        \end{cases}
    \end{align*}
    and, finally,
    \begin{align*}
        \frac1{(\sigma_{B^2}^{-1})'(z) } &= \sqrt{1-2|z|} = \begin{cases}
            \sqrt{1+2z}, & \text{if } -\frac12 < z\leq 0,\\
            \sqrt{1-2z}, & \text{if } 0< z< \frac12.
        \end{cases}
    \end{align*}
    
\end{proof}

\subsection{Subdivision-based activation functions} \label{sec_subdivision}

The vast literature on linear subdivision schemes provides a wide range of examples of refinable functions, like the B-Splines, the Daubechies wavelets, etc. We refer to \cite{Dyn92,DL02} for a revision of this theory. In the following, some subdivision concepts will be introduced, marked with italics, without going into details. 

Given some initial data, $f^0$, a \emph{subdivision scheme} iteratively generates a sequence of data, $\{f^k\}_{k\in\N}$, that converges to a function, in some sense. For instance, in the context of geometric modeling, the initial data could be a set of points in the plane, and the scheme generates denser and denser sets of points. In the limit, a smooth curve is obtained, with a shape determined by the initial data.

More in details, a subdivision scheme may consist in the repeated application of the recursive formula
\begin{align} \label{eq_subdivision}
    f^{k+1}_{i} = \sum_{j\in\Z} a_{i-\arity j} f^k_j, \qquad \forall i\in\Z,
\end{align}
where $f^k = [f^k_i]_{i\in\Z}\in \ell_\infty(\Z)$ is a bi-infinite bounded sequence and $a = [a_j]_{j\in\Z}$ is a finitely supported sequence, called the \emph{mask} of the scheme. For a given initial data $f^0$, this iterative process \emph{converges} when the elements of $f^k$ converges uniformly to the point-evaluations of some continuous function. It is well-known in the subdivision theory that the scheme converges if, and only if, there exists a continuous compactly-supported function $\phi:\R\to\R$ (called \emph{basic limit function}) such that
\begin{equation} \label{eq_phi_refinable_general}
    \phi(t) = \sum_{l\in\Z} a_l \phi(\arity t - l), \quad \forall t\in\R.    
\end{equation}
Thus, $\phi$ is a refinable function.
In addition, for the initial data $f^0$, the subdivision scheme converges to the function $\sum_{l\in\Z} f^0_l \phi(t-l)$. To simplify the notation, we assume that the support of $a$ is $[0,\bd+1]$, for some $\bd\in\N$, and it can be proven that the support of $\phi$ is contained in $(0,\bd+1)$ (see Section 2.3 of \cite{DL02}). This assumption is aligned with the B-Spline case of Section \ref{sec_bsplines} and is mild. 
Then, the sum in \eqref{eq_phi_refinable_general} is finite,
\begin{equation} \label{eq_refinable_phi}
    \phi(t) = \sum_{l=0}^{\bd} a_l \phi(\arity t - l), \quad \forall t\in\R,
\end{equation}
which coincides with Definition \ref{def_refinable} with $A = \bd+1$ and $\tau = 0$.

The theory on linear subdivision schemes is well-developed and we can use many of its results. A very convenient theoretical tool for our purposes are the Laurent series. Given a bounded sequence $f = [f_i]_{i\in\Z}$, the associated Laurent series is
$$\hat{f}(z) := \sum_{i\in\Z} f_i z^{i}.$$
With this concept, \eqref{eq_subdivision} can be equivalently written as
$ \hat{f}^{k+1}(z) = \hat{a}(z) \hat{f}^k(z^\arity),$ and a necessary condition for the convergence of the scheme is that $\hat{a}(1) = 2$ and $\hat a(-1) = 0$, or equivalently \(\sum_{i\in\Z} a_{\arity i} = \sum_{i\in\Z} a_{\arity i + 1} = 1\). This implies that the so-called \emph{reproduction of constant functions},
\begin{equation}\label{eq_reproduce_constant}
    \sum_{l\in\Z} \phi(t-l) = 1, \quad \forall t\in\R,
\end{equation}
and that $\hat a(z)$ can be factorized as
\begin{equation} \label{eq_factorization}
    \hat{a}(z) = (1+z)\hat{b}(z),
\end{equation}
for some \emph{Laurent polynomial} $\hat{b}(z)$ (i.e., the sequence $b = [b_i]_{i\in\Z}$ is finitely supported), with $\hat b(1)=1$.

Another property that is essential to obtain a non-decreasing activation function (as asserted in Theorem \ref{thm_sigma_properties}-\ref{thm_sigma_properties:refinable}) is the \emph{monotonicity} of the subdivision scheme. This property ensures that if $f^0$ is non-decreasing, then the limit function is non-decreasing, which recall that it is $\sum_{l\in\Z} f^0_l \phi(t-l)$. A sufficient condition for the monotonicity of the scheme is that $\hat a(z)$ can be factorized as in \eqref{eq_factorization} with $b_l \geq 0$, for all $l\in\Z$.

The last subdivision property that we will use is related with the \emph{generation} of the first degree polynomials. We demand to the basic limit function $\phi$ to fulfill
\begin{equation}\label{eq_reproduce_polynomials}
    t - \frac{\bd+1}2 = \sum_{l\in\Z} l \phi(t-l), \quad \forall t\in\R.
\end{equation}

Before addressing the main result, we need a technical lemma.
\begin{lem} \label{lem:tecnico}
    Let $a,b$ be finitely supported sequences in $\ell_\infty(\Z)$ and let $c\in \ell_\infty(\Z)$ be. Then,
    \[
        \hat a(z) \hat c(z^\arity ) = \sum_{i\in\Z}\left(\sum_{m\in\Z} c_m a_{i-\arity m}\right) z^{i},
    \]
    and, if $\hat a(z) \hat c(z^\arity) = \hat b(z)\hat c(z)$, then
    \(
        \sum_{m\in\Z} c_m a_{i-\arity m} = \sum_{m\in\Z} b_m c_{i-m}, \ \forall i\in\Z.
    \)
\end{lem}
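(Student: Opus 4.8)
The plan is to work entirely at the level of formal Laurent series and simply read off the coefficient of each power of $z$. For the first identity I would expand the product directly: writing $\hat a(z)=\sum_{j\in\Z}a_j z^{j}$ and $\hat c(z^\arity)=\sum_{m\in\Z}c_m z^{\arity m}$, termwise multiplication gives $\sum_{j,m\in\Z} a_j c_m\, z^{j+\arity m}$, and grouping all terms whose total exponent equals $i$ (that is, those with $j=i-\arity m$) yields $\sum_{m\in\Z} c_m a_{i-\arity m}$ as the coefficient of $z^i$, which is exactly the claimed expression. The only point that genuinely needs checking is well-definedness: since $c$ is merely bounded and not finitely supported, an arbitrary product of two such series need not make sense. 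Here, however, $a$ is finitely supported, so for each fixed $i$ only the finitely many $m$ with $i-\arity m\in\supp a$ contribute; the inner sum $\sum_{m\in\Z} c_m a_{i-\arity m}$ is therefore a finite sum of bounded quantities and the resulting coefficient sequence is again bounded. This is just the observation that convolving a finitely supported sequence with a bounded one is unproblematic.

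For the second statement I would apply the same device to the right-hand side $\hat b(z)\hat c(z)$: expanding $\sum_{j,m\in\Z} b_j c_m\, z^{j+m}$ and collecting the terms with total exponent $i$ gives, after parametrizing by the index $m$ coming from $\hat b$, the coefficient $\sum_{m\in\Z} b_m c_{i-m}$, once more a finite sum because $b$ is finitely supported. Since the hypothesis $\hat a(z)\hat c(z^\arity)=\hat b(z)\hat c(z)$ is an identity of formal Laurent series, their coefficients must coincide power by power; equating the coefficient of $z^i$ from the two computations then gives precisely $\sum_{m\in\Z} c_m a_{i-\arity m}=\sum_{m\in\Z} b_m c_{i-m}$ for every $i\in\Z$, as required.

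The main subtlety, and really the only one, is the legitimacy of manipulating Laurent series whose coefficients are only bounded: forming the products, reindexing the double sums, and extracting coefficients are all justified precisely because in each product at least one factor ($a$ or $b$) is finitely supported, so no genuinely infinite summation is ever performed and uniqueness of coefficients applies. Once that is granted the argument is pure bookkeeping, so I would simply keep the two coefficient computations side by side to make the final comparison immediate.
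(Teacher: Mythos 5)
Your proof is correct and takes essentially the same approach as the paper's: both expand the product to identify the coefficient of $z^i$ in $\hat a(z)\hat c(z^\arity)$ (the paper via an even/odd split of the indices of $a$, you via a direct double-sum reindexing, which is the same bookkeeping) and then obtain the second claim by comparing coefficients term by term with $\hat b(z)\hat c(z)$. Your explicit remark on well-definedness --- that each inner sum is finite because $a$ (resp.\ $b$) is finitely supported, so the formal manipulations and coefficient comparison are legitimate even though $c$ is only bounded --- is a point the paper leaves implicit.
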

\begin{proof}
    For the first part, we compute:
    \begin{align*}
        \hat a(z) \hat c(z^\arity ) &= \left(\sum_{m\in\Z} a_m z^m \right) \left(\sum_{m\in\Z} c_m z^{\arity m} \right)
        = \left(\sum_{m\in\Z} a_{\arity m} z^{\arity m}+z\sum_{m\in\Z} a_{\arity m + 1} z^{\arity m} \right) \left(\sum_{m\in\Z} c_m z^{\arity m} \right)
        \\
        &= \sum_{i\in\Z}\left(\sum_{m\in\Z} c_m a_{\arity (i-m)}\right) z^{\arity i}+z\sum_{i\in\Z}\left(\sum_{m\in\Z} c_m a_{\arity (i-m)+1}\right) z^{\arity i}\\
        &= \sum_{i\in\Z}\sum_{m\in\Z} c_m a_{\arity i-\arity m} z^{\arity i}+c_m a_{\arity i+1- \arity m} z^{\arity i+1}
        = \sum_{m\in\Z} c_m \sum_{i\in\Z}\left(a_{\arity i-\arity m} z^{\arity i}+a_{\arity i+1- \arity m} z^{\arity i+1}\right)\\
        &= \sum_{m\in\Z} c_m \sum_{i\in\Z}a_{i-\arity m} z^{i}
        = \sum_{i\in\Z}\left(\sum_{m\in\Z} c_m a_{i-\arity m}\right) z^{i}.
    \end{align*}
    The second part follows from comparing term by term the last series with the series
    \[
        \hat b(z)\hat c(z) = \sum_{i\in\Z}\left(\sum_{m\in\Z} b_m c_{i-m}\right) z^i.
    \]
\end{proof}

For a given basic limit function $\phi$, we can define the associated activation function $\sigma$ as in Definition \ref{defi_sigma_from_phi}. The following theorem states some properties of $\sigma$ that are inherited from $\phi$.

\begin{thm} \label{thm_sigma_properties}
    Let $\phi:\R\to\R$ be the basic limit function, whose support is contained in $(0,\bd+1)$, associated to a convergent subdivision scheme with mask $a$. Let $b\in\ell_\infty(\Z)$ be finitely supported, such that $\hat{a}(z) = (1+z)\hat{b}(z)$.
    Then, the associated activation function $\sigma:\R\to\R$ fulfills:
    \begin{enumerate}[label=(\alph*)]
        \item \label{thm_sigma_properties:suport} $\sigma(t) = -\frac12$, if $t\leq -\frac{\bd}2$, $\sigma(t) = \frac12$, if $t\geq\frac{\bd}2$, and $\sigma(t) = -\frac12 + \sum_{m=0}^{d-1} \phi(t+\frac{\bd}2-m)$, if $-\frac{\bd}2\leq t\leq \frac{\bd}2$.
        \item \label{thm_sigma_properties:smooth} $\sigma$ is as smooth as $\phi$.
        \item \label{thm_sigma_properties:difference} $\sigma(t) - \sigma(t-1) = \phi\left(t+\frac{\bd}2\right)$.
        \item \label{thm_sigma_properties:symmetric} If $\phi(t) = \phi(d+1-t)$, $\forall t\in\R$,  then $\sigma(-t) = -\sigma(t)$, $\forall t\in\R$.
        \item \label{thm_sigma_properties:refinable} $\sigma$ is refinable with $A^\sigma := \bd+1$, $a^\sigma := b$ and $\tau^\sigma := \frac{d}2$. In addition, if $b_l\geq 0$, $\forall l\in\Z$, then $\sigma$ is non-decreasing.
        \item \label{thm_sigma_properties:sum} If $\sum_{i\in\Z} i \phi(t-i) = t - \frac{\bd+1}2$, $\forall t\in \R$, and $B\geq d$ is chosen, then $\sigma$ sums the identity in the interval $I = [-\frac{B-d+1}2,\frac{B-d+1}2]$ with $\mu = \frac{B-1}2$.
    \end{enumerate}
\end{thm}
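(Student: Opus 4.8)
The plan is to treat parts (a)--(f) in a dependency order, with the local finite representation of $\sigma$ and the first-difference identity (c) as the two workhorses. First I would prove (a): since $\supp\phi\subset(0,\bd+1)$, the term $\phi(t+\tfrac{\bd}2-m)$ in \eqref{eq_sigma_from_phi} is nonzero only for $m$ in a bounded range, so for $t\le-\bd/2$ every term vanishes and $\sigma\equiv-\tfrac12$, while for $-\bd/2\le t\le\bd/2$ only $m=0,\dots,\bd-1$ survive. For $t\ge\bd/2$ I would subtract the one-sided sum from the bi-infinite sum and invoke reproduction of constants \eqref{eq_reproduce_constant} to get $\sigma\equiv\tfrac12$. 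Part (b) is then immediate: on any bounded interval $\sigma$ equals a constant plus a finite sum of translates of $\phi$, hence is exactly as smooth as $\phi$, including across $t=\pm\bd/2$. Part (c) follows by shifting the summation index by one in $\sigma(t-1)$ and telescoping, leaving only the $m=0$ term $\phi(t+\tfrac{\bd}2)$.

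For the odd symmetry (d), I would insert the hypothesis $\phi(s)=\phi(\bd+1-s)$ into the series for $\sigma(-t)$ to rewrite it as $-\tfrac12+\sum_{k\ge1}\phi(t+\tfrac{\bd}2+k)$, and separately use \eqref{eq_reproduce_constant} to express $-\sigma(t)$ in the identical form $-\tfrac12+\sum_{k\ge1}\phi(t+\tfrac{\bd}2+k)$; the two then coincide.

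The conceptual core is refinability (e). I would set $g(t):=\sum_{l=0}^{\bd} b_l\,\sigma(\arity t+\tfrac{\bd}2-l)$ and show $g=\sigma$ by a difference argument. Telescoping (c) twice gives $\sigma(s)-\sigma(s-2)=\phi(s+\tfrac{\bd}2)+\phi(s-1+\tfrac{\bd}2)$, whence $g(t)-g(t-1)=\sum_l b_l[\phi(\arity t+\bd-l)+\phi(\arity t+\bd-l-1)]$; reindexing collapses this to $\sum_k (b_k+b_{k-1})\phi(\arity t+\bd-k)$. The factorization \eqref{eq_factorization} gives precisely $a_k=b_k+b_{k-1}$, so by the refinement \eqref{eq_refinable_phi} of $\phi$ this sum equals $\phi(t+\tfrac{\bd}2)=\sigma(t)-\sigma(t-1)$. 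Since $\hat b(1)=1$ forces $g(t)\to-\tfrac12=\sigma(t)$ as $t\to-\infty$, the $1$-periodic function $g-\sigma$ has zero limit at $-\infty$ and hence vanishes identically. For the monotonicity claim I would recognize $\sigma+\tfrac12=\sum_{m\ge0}\phi(\cdot+\tfrac{\bd}2-m)$ as the subdivision limit of the non-decreasing, bounded Heaviside datum $f^0_l=\mathbf{1}_{l\ge 0}$, so that monotonicity of the scheme---guaranteed by $b_l\ge0$ via \eqref{eq_factorization}---makes $\sigma$ non-decreasing.

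Part (f) is where I expect the real difficulty. Substituting \eqref{eq_sigma_from_phi} into $S(t):=\sum_{l=0}^{\nB-1}\sigma(t+\mu-l)$ with $\mu=\tfrac{\nB-1}2$ and reindexing the double sum by $n=l+m$ yields $S(t)=-\tfrac{\nB}2+\sum_{n\ge0}\min(n+1,\nB)\,\phi(s-n)$, where $s=t+\tfrac{\nB+\bd-1}2$. The trick is to replace the clipped weight $\min(n+1,\nB)$ by the linear weight $n+1$ and extend the sum to all $n\in\Z$; this is legitimate exactly on the range of $t$ where $\supp\phi$ confines every contributing index to $\{-1,0,\dots,\nB-1\}$, using crucially that the replacement is harmless at $n=-1$ because there $n+1=0$. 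Then reproduction of constants \eqref{eq_reproduce_constant} and of first-degree polynomials \eqref{eq_reproduce_polynomials} turn the bi-infinite sum into $s-\tfrac{\bd-1}2$, and combined with the $-\tfrac{\nB}2$ offset and the value of $s$ the constants cancel to leave $S(t)=t$. The delicate point, and the main obstacle, is tracking the support constraints $\bd-1\le s\le \nB$ to pin down the left endpoint of $I$: the naive requirement that all contributing indices be nonnegative would only give $t\ge-\tfrac{\nB-\bd-1}2$, and it is precisely the vanishing of the weight at $n=-1$ that extends validity down to $t=-\tfrac{\nB-\bd+1}2$, producing the symmetric interval $I=[-\tfrac{\nB-\bd+1}2,\tfrac{\nB-\bd+1}2]$ with $\mu=\tfrac{\nB-1}2$ as claimed.
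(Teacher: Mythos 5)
Your proposal is correct, and on parts (a)--(c) and (f) it tracks the paper's own proof closely: in (f) your clipped-weight bookkeeping $\min(n+1,B)$ followed by extension to a bi-infinite sum is the same mechanism as the paper's truncate-then-swap computation, including the decisive observation that the vanishing weight at $n=-1$ is what pushes the left endpoint down to $-\frac{B-d+1}2$ and yields the symmetric interval $I$. Where you genuinely diverge is in (d) and (e). For (d), the paper argues by induction on the shells $|t|\geq (d-n)/2$, repeatedly applying the difference identity (c), whereas you rewrite both $\sigma(-t)$ and $-\sigma(t)$ directly as $-\frac12+\sum_{k\geq 1}\phi\left(t+\frac{d}2+k\right)$ using the symmetry of $\phi$ and \eqref{eq_reproduce_constant}; your route is shorter and avoids the induction entirely. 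For (e), the paper works with Laurent series: it proves Lemma \ref{lem:tecnico}, considers a general combination $\bar\sigma(t)=c'+\sum_m c_m\phi(t+\tau-m)$, and shows refinability whenever $\hat a(z)\hat c(z^{2})=\hat b(z)\hat c(z)$ with $\hat b(1)=1$, then specializes to the Heaviside coefficients $c_m=1$ for $m\geq 0$, for which this condition is exactly the factorization \eqref{eq_factorization}. Your argument instead shows that $g(t)=\sum_{l=0}^{d}b_l\sigma\left(2t+\frac{d}2-l\right)$ and $\sigma$ have equal first differences (via $a_k=b_k+b_{k-1}$ and the refinement equation of $\phi$), so $g-\sigma$ is $1$-periodic, and then pins it to zero because $\hat b(1)=1$ forces $g\equiv-\frac12=\sigma$ on a half-line near $-\infty$. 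This is more elementary and self-contained---it avoids manipulating the generating function of the non-finitely-supported sequence $c$---at the cost of generality: the paper's detour characterizes which combinations of shifts of $\phi$ produce refinable functions, a fact it reuses in the remark following the theorem. (Do note, as the paper does, that $\supp b\subset[0,d]$ follows from $\supp a\subset[0,d+1]$ and the factorization; you use this tacitly when summing $l$ from $0$ to $d$.) Both treatments of monotonicity rest on the same standard subdivision fact about non-negative difference schemes applied to non-decreasing data, so there you coincide with the paper.
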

\begin{proof}
    
    \ref{thm_sigma_properties:suport} For $t\leq -\bd/2$, we have that $t+\frac{\bd}2 -m \leq -m\leq 0$, for all $m\geq 0$. Thus $\phi(t+\frac{\bd}2 -m) = 0$ and $\sigma(t) = -\frac12$ since the sum in \eqref{eq_sigma_from_phi} vanishes. For $t\geq \bd/2$, $t+\frac{\bd}2 -m \geq \bd-m \geq \bd+1$ for all $m\leq -1$. Then, $\phi(t+\frac{\bd}2 -m) = 0$ for all $m\leq -1$ and we can extend the sum range:
    \[
    \sigma(t) = -\frac12 + \sum_{m\in\Z} \phi\left(t+\frac{\bd}2 -m\right) \overset{\eqref{eq_reproduce_constant}}{=} -\frac12 + 1 = \frac12.
    \]
    For $-\bd/2\leq t\leq \bd/2$, we have that $t+\frac{\bd}2 -m \leq d-m \leq 0$ if $m\geq d$. Thus, $\phi(t+\frac{\bd}2 -m)=0$ and the sum in \eqref{eq_sigma_from_phi} is finite:
    \[
    \sigma(t) = -\frac12 + \sum_{m=0}^{d-1} \phi\left(t+\frac{\bd}2 -m\right).
    \]
    
    \ref{thm_sigma_properties:smooth} Since $\phi$ is compactly supported, the sum in \eqref{eq_sigma_from_phi} is finite for each $t\in\R$. Thus, $\sigma$ is as smooth as $\phi$.
    For \ref{thm_sigma_properties:difference}, we have that
    \begin{align*}
        \sigma(t) - \sigma(t-1) &= \sum_{m=0}^\infty \phi\left(t+\frac{\bd}2-m\right) - \sum_{m=0}^\infty \phi\left(t+\frac{\bd}2-m-1\right) \\
        &= \sum_{m=0}^\infty \phi\left(t+\frac{\bd}2-m\right) - \sum_{m=1}^\infty \phi\left(t+\frac{\bd}2-m\right) = \phi\left(t+\frac{\bd}2\right).
    \end{align*}
    
    \ref{thm_sigma_properties:symmetric} Suppose that $\phi(t) = \phi(d+1-t)$, $\forall t\in\R$. We will prove by induction that $\sigma(t) = -\sigma(-t)$, if $|t|\geq (\bd-n)/2$, for $n=0,1,\ldots,\bd$. Observe the case $n=d$ prove the symmetry for any $t\in\R$. The case $n=0$ is a consequence of \ref{thm_sigma_properties:suport}. Now, suppose that $\sigma(t) = -\sigma(-t)$, if $|t|\geq (\bd-(n-1))/2$. Let us consider $t\geq (\bd-n)/2$ (the case $t\leq -(\bd-n)/2$ is analogous). Then $t+1\geq (\bd-(n-1))/2$ and
    \begin{align*}
        \sigma(t) &\overset{\ref{thm_sigma_properties:difference}}{=} \sigma(t+1) - \phi\left(t+1+\frac{\bd}2\right) \overset{\text{I.H.}}= -\sigma(-t-1) - \phi\left(t+1+\frac{\bd}2\right)\\
        &\overset{\phi \text{ sym.}}{=} -\sigma(-t-1) - \phi\left(d+1-\left(t+1+\frac{\bd}2\right)\right)
        = -\left(\sigma(-t-1) + \phi\left(-t+\frac{\bd}2\right)\right)
        \overset{\ref{thm_sigma_properties:difference}}{=} -\sigma(-t).
    \end{align*}
    
    For \ref{thm_sigma_properties:refinable}, we perform a deductive proof to show if other kinds of refinable activation functions $\sigma$ are feasible. Denote by $a^\phi$ the mask associated to $\phi$. We construct a $\bar \sigma$ that is a linear combination of $\phi$ and its shifts. For some $c\in\ell_\infty(\Z)$ and $c',\tau\in\R$, we consider
    \begin{equation} \label{eq_sigma}
        \bar \sigma(t) = c' + \sum_{m\in\Z} c_m \phi(t+\tau-m),
    \end{equation}
    and suppose that there exists some finitely supported $b\in\ell_\infty(\Z)$ such that $\hat{a}^\phi(z) \hat c(z^\arity)=  \hat{b}(z) \hat c(z)$. Then,
    \begin{align*}
        \bar\sigma(t-\tau)-c' &=
        \sum_{m\in\Z} c_m \phi(t-m)
        \overset{\phi\text{ refi.}}{=} \sum_{m\in\Z} c_m \sum_{i\in\Z} a^\phi_i\phi(\arity(t-m)-i)
        =\sum_{m\in\Z} c_m \sum_{i\in\Z} a^\phi_{i-\arity m}\phi(\arity t-i)  \\
        &=\sum_{i\in\Z} \phi(\arity t-i)\sum_{m\in\Z} c_m a^\phi_{i-\arity m} \overset{\text{Lemma \ref{lem:tecnico}}}= \sum_{i\in\Z} \phi(\arity t-i)\sum_{m\in\Z} b_m c_{i-m}
        = \sum_{m\in\Z} b_m\sum_{i\in\Z} c_{i-m} \phi(\arity t-i)\\
        &= \sum_{m\in\Z} b_m\sum_{i\in\Z} c_{i} \phi(\arity t-i-m).
    \end{align*}
    If $\sum_{m\in\Z} b_m = \hat b(1) = 1$, then
    \begin{align*}
        \bar \sigma(t-\tau) &= c' + \sum_{m\in\Z} b_m\sum_{i\in\Z} c_i \phi(\arity t-m-i) = \sum_{m\in\Z} b_mc' + \sum_{m\in\Z} b_m\sum_{i\in\Z} c_i \phi(\arity t-m-i)\\
        &= \sum_{m\in\Z} b_m\left(c' + \sum_{i\in\Z} c_i \phi(\arity t-m-i)\right), \\
        \bar \sigma(t) &= \sum_{m\in\Z} b_m\left(c' + \sum_{i\in\Z} c_i \phi(\arity t+2\tau-m-i)\right) \overset{\eqref{eq_reproduce_constant}}{=} \sum_{m\in\Z} b_m\bar \sigma(2t+\tau-m).
    \end{align*}
    We conclude that $\bar \sigma$ defined in \eqref{eq_sigma} is refinable provided that $ \hat{a}(z) \hat c(z^\arity)=  \hat{b}(z) \hat c(z)$ with $\hat b(1) = 1$.
    In particular, the activation function of Definition \ref{defi_sigma_from_phi} fulfils this with $c'=-\frac12, \tau = \frac{d}2$ and $c_m = 1$, if $m\geq 0$, and $c_m = 0$, if $m<0$.
    Observe that $\hat c(z) = \sum_{m=0}^\infty z^m = (1-z)^{-1}$ and, thus, the requirement $ \hat{a}(z) \hat c(z^\arity)=  \hat{b}(z) \hat c(z)$ is equivalent to $\hat{a}(z) = (1+z) \hat{b}(z)$, which is true since the scheme is convergent by hypothesis, which also implies that $\hat b(1)=1$. Since the support of $a$ is $[0,\bd+1]$, then the support of $b$ is $[0,\bd]$. Summarizing, we deduced that
\[
    \sigma(t) = \sum_{l=0}^{\bd} b_l\sigma\left(\arity t+ \frac{d}2-l\right),
\]
    i.e., $\sigma$ is refinable with $A^\sigma = d+1$, $a^\sigma = b$ and $\tau^\sigma = \frac{d}2$. 
    
    In addition, $b$ are the coefficients of what is commonly referred to as the \emph{difference scheme}. It is well-established that if $a^\sigma_{l} = b_l \geq 0$, then $\sum_{m \in \mathbb{Z}} c_m \phi(t - m)$ is a non-decreasing function for any non-decreasing sequence $c$. Specifically, since our chosen sequence $c$ is non-decreasing, it follows that $\sigma$ is also non-decreasing.
    
    For \ref{thm_sigma_properties:sum}, suppose that $t\in [-\frac{B-d+1}2,\frac{B-d+1}2]$.
    \begin{align*}
        \sum_{l=0}^{B-1} \sigma\left(t + \frac{B-1}2 -l\right)
        &= \sum_{l=0}^{B-1} \left(-\frac12 +\sum_{m=0}^\infty \phi\left(t + \frac{B-1}2 + \frac{d}2 -l -m\right)\right) \\
        &= -\frac{B}2 + \sum_{l=0}^{B-1} \sum_{m=0}^\infty \phi\left(t + \frac{B+d-1}2 -l -m\right).
    \end{align*}
    Since the support of $\phi$ is contained in $(0,d+1)$, and $t + \frac{B+d-1}2 -l -m \leq \frac{B-d+1}2 + \frac{B+d-1}2  -l -m = B -l -m \leq 0$ if, and only if, $B-l\leq m$, we have that
    \begin{align*}
        \frac{B}2 + \sum_{l=0}^{B-1} \sigma\left(t + \frac{B-1}2 -l\right) &= \sum_{l=0}^{B-1} \sum_{m=0}^{B-l-1} \phi\left(t + \frac{B+d-1}2 -l -m\right) = \sum_{l=0}^{B-1} \sum_{m=l}^{B-1} \phi\left(t + \frac{B+d-1}2 -m\right) \\
        =& \sum_{m=0}^{B-1} \sum_{l=0}^m \phi\left(t + \frac{B+d-1}2 -m\right) = \sum_{m=0}^{B-1} \phi\left(t + \frac{B+d-1}2 -m\right)\sum_{l=0}^m  1\\
        =& \sum_{m=0}^{B-1} (m+1)\phi\left(t + \frac{B+d-1}2 -m\right).
    \end{align*}
    Observe that the term $m=-1$ can be added, since it is zero. Thus,
    \begin{align*}
        \frac{B}2 + \sum_{l=0}^{B-1} \sigma\left(t + \frac{B-1}2 -l\right) &= \sum_{m=-1}^{B-1} (m+1)\phi\left(t + \frac{B+d-1}2 -m\right) 
        = \sum_{m=0}^{B} m\phi\left(t + \frac{B+d+1}2 -m\right).
    \end{align*}
    In addition, the sum can be extended to all the integers:
    \begin{align*}
        m\leq -1,& \quad \rightarrow \quad t + \frac{B+d+1}2 -m \geq t + \frac{B+d+1}2 +1 \geq -\frac{B-d+1}2 + \frac{B+d+1}2 +1 = d+1.\\
        m \geq B+1,& \quad \rightarrow \quad t + \frac{B+d+1}2 -m \leq t + \frac{B+d+1}2 -B-1 \leq \frac{B-d+1}2  + \frac{B+d+1}2 -B-1 = 0.
    \end{align*}
    In any of both cases, $\phi(t + \frac{B+d+1}2 -m) = 0$, due to the support of $\phi$ is contained in $(0,d+1)$. Finally, 
    using the hypothesis we conclude that
    \begin{align*}
        \sum_{l=0}^{B-1} \sigma\left(t + \frac{B-1}2 -l\right) &= -\frac{B}2 + \sum_{m\in\Z} m\phi\left(t + \frac{B+d+1}2 -m\right) = -\frac{B}2 + t + \frac{B+d+1}2 - \frac{d+1}2 = t.
    \end{align*}
\end{proof}

\begin{rmk}
    As a consequence of Theorem \ref{thm_sigma_properties}-\ref{thm_sigma_properties:refinable}, we deduce that not all basic limit functions $\phi$ are appropriate for constructing refinable activation functions. In particular, ensuring that $b_l\geq 0$, $\forall l\in\Z$, holds is essential for obtaining a non-decreasing activation function. In subdivision theory, schemes that satisfy $b_l\geq 0$, $\forall l\in\Z$, are termed monotone. Such schemes inherently fulfil a variation diminishing property, which suppresses oscillatory behavior (see \cite{Yadshalom93}). This is especially important since oscillatory activation functions may lead to performance issues.

    In addition to the B-Spline case studied in this paper, other examples of monotone subdivision schemes that generates first-degree polynomials---and thereby allow the definition of proper activation functions as described---include those based on weighted polynomial regression (see \cite{LY24}).
\end{rmk}

\begin{rmk}
    Only a few subdivision schemes yield basic limit functions $\phi$ that admit a closed-form expression, such as B-Splines and exponential B-Splines schemes (see \cite{Dyn92,UB05}).
    
    Nevertheless, $\sigma$ can be approximated with arbitrary precision using subdivision. We recall that, when a convergent subdivision scheme is recursively applied to an initial sequence $f^0$, the process converges to $\sum_{l\in\Z} f^0_l \phi(t-l)$. Hence, by considering $f^0_l := -\frac12$, if $l<0$, and $f^0_l := \frac12$, if $l\geq 0$, we obtain the limit function
    \begin{align*}
        -\frac12 \sum_{l<0} \phi(t-l) + \frac12 \sum_{l\geq0} \phi(t-l) \overset{\eqref{eq_reproduce_constant}}{=}& -\frac12 \sum_{l<0} \phi(t-l) + \frac12 \sum_{l\geq0} \phi(t-l) - \frac12 + \frac12 \sum_{l\in\Z} \phi(t-l) \\
        =& -\frac12 + \sum_{l\geq 0} \phi(t-l) = \sigma\left(t-\frac{d}2\right).
    \end{align*}
    In conclusion, $\sigma\left(t-\frac{d}2\right)$ is the limit function of the subdivision scheme applied to such initial sequence $f^0$.
\end{rmk}

\begin{proof}[Proof of Theorem \ref{thm_sigmaB_properties}]
    For \ref{thm_sigmaB_properties_support}-\ref{thm_sigmaB_properties_sum}, we apply Theorem \ref{thm_sigma_properties}. All the requirements are met since $\phi_{B^\bd}$ is $\cC^{d-1}$, fulfills $\phi(t) = \phi(d+1-t)$, fulfils \eqref{eq_bsplines_identity} and is the basic limit function of the subdivision scheme with mask $a^{\phi_{B^\bd}}_l = 2^{-\bd}\binom{\bd+1}{l}$, for $l=0,\ldots,\bd+1$, and $a^{\phi_{B^\bd}}_l=0$ otherwise. The associated Laurent polynomial is $\hat a^{\phi_{B^\bd}}(z) = 2^{-\bd} (1+z)^{d+1}$, thus $\hat b^{\phi_{B^\bd}}(z) = 2^{-\bd} (1+z)^{d}$ and then $a_l = b^{\phi_{B^\bd}}_l = 2^{-\bd}\binom{\bd}{l}$, for $l=0,\ldots,\bd$.
    
    For \ref{thm_sigmaB_properties_expression}, we will prove that the identity is true for any $t\in [-d/2-1+n,-d/2+n]$, by induction on $n$. By \ref{thm_sigmaB_properties_support}, $\sigma_{B^\bd}(t) = -1/2$ for $t\leq -\bd/2$, then this is fulfilled for any $n\leq 0$, since
    $$0 \leq \max\{t+d/2-l,0\} \leq \max\{-d/2+n+d/2-l,0\} \leq \max\{-l,0\} = 0, \qquad l\geq 0.$$
    Now, we assume that it is true for $n-1$ and we prove it for $n$. Using \ref{thm_sigma_properties:difference} of Theorem \ref{thm_sigma_properties}, we have that
    \begin{align*}
        \sigma_{B^\bd}(t) =& \sigma_{B^\bd}(t-1) + \phi_{B^{\bd}}(t+\frac12) \\
        \overset{\text{\eqref{eq_bsplines_direct} \& I.H.}}{=}& -\frac12 + \sum_{l=0}^{\bd} \frac{(-1)^l}{\bd!} \binom{\bd}{l} \max\{t-1+d/2-l,0\}^\bd + \sum_{l=0}^{d+1} \frac{(-1)^{l}}{d!}\binom{d+1}{l} \max\{t+d/2-l,0\}^{d}.
    \end{align*}
    Performing the summation change $l\to l-1$ in the first sum and splitting $\binom{\bd+1}{l} = \binom{\bd}{l-1} + \binom{\bd}{l}$ in the second sum (which demands considering $\binom{\bd}{-1} = 0 = \binom{\bd}{\bd+1}$), we obtain
    \begin{align*}
        \sigma_{B^\bd}(t) =& -\frac12 + \sum_{l=1}^{\bd+1} \frac{(-1)^{l-1}}{\bd!} \binom{\bd}{l-1} \max\{t+d/2-l,0\}^\bd + \sum_{l=0}^{d+1} \frac{(-1)^{l}}{d!}\binom{d}{l-1} \max\{t+d/2-l,0\}^{d} \\
        &+ \sum_{l=0}^{d+1} \frac{(-1)^{l}}{d!}\binom{d}{l} \max\{t+d/2-l,0\}^{d}.
    \end{align*}
    Observe that the term $l=0$ vanishes in the second sum, and the term $l=d+1$ vanishes in the third sum. Consequently, the first and second sums cancel out, leading to:
    \begin{align*}
        \sigma_{B^\bd}(t) =& -\frac12 + \sum_{l=0}^{d} \frac{(-1)^{l}}{d!}\binom{d}{l} \max\{t+d/2-l,0\}^{d}.
    \end{align*}
    
    For \ref{thm_sigmaB_properties_recursive}, denote by $T_l$ the shift operator $(T_l \phi)(t) = \phi(t-l)$. By definition,
    \(
    \sigma_{B^\bd} = -\frac12 + \sum_{m=0}^\infty T_{m-d/2} \phi_{B^\bd}.
    \)
    Since $\phi$ is compactly supported, for a given $t\in\R$, there exists some $M\in\N$ such that
    \begin{align*}
        \sigma_{B^\bd}(t) &= -\frac12 + \sum_{m=0}^M (T_{m-d/2} \phi_{B^\bd})(t), \quad
        \sigma_{B^{\bd-1}}(t) = -\frac12 + \sum_{m=0}^M (T_{m-(d-1)/2} \phi_{B^{\bd-1}})(t).
    \end{align*}
    By \eqref{eq_bsplines},
    \begin{align*}
        \sigma_{B^\bd}(t) &= -\frac12 + \sum_{m=0}^M (T_{m-d/2} (\phi_{B^{d-1}}*\phi_{B^0}))(t) = -\frac12 + \sum_{m=0}^M (T_{m-d/2+1/2}T_{-1/2} (\phi_{B^{d-1}}*\phi_{B^0}))(t).
    \end{align*}
    Using the translational equivarence of the convolution, i.e. $T_l (\phi_1*\phi_2) = (T_l\phi_1)*\phi_2=\phi_1*(T_l \phi_2)$, we obtain that
    \begin{align*}
        \sigma_{B^\bd}(t) &= -\frac12 + \sum_{m=0}^M ((T_{m-d/2+1/2}\phi_{B^{d-1}})*(T_{-1/2}\phi_{B^0}))(t)\\
        &= -\frac12 + \sum_{m=0}^M ((T_{m-(d-1)/2}\phi_{B^{d-1}})*U)(t)
        = -\frac12 + \left(U*\sum_{m=0}^M T_{m-(d-1)/2}\phi_{B^{d-1}}\right)(t)\\
        &= \left(U*\left(-\frac12 + \sum_{m=0}^M T_{m-(d-1)/2}\phi_{B^{d-1}}\right)\right)(t)= \left(U* \sigma_{B^{d-1}}\right)(t).
    \end{align*}
    
    Finally, \ref{thm_sigmaB_properties_derivative} is a consequence of \ref{thm_sigmaB_properties_recursive} and Theorem \ref{thm_sigma_properties}-\ref{thm_sigma_properties:difference}, since
    \[
    \sigma_{B^\bd}'(t) = \sigma_{B^{\bd-1}}(t+1/2) - \sigma_{B^{\bd-1}}(t-1/2) = \phi_{B^{\bd-1}}\left(t+\frac12+\frac{d-1}2\right)= \phi_{B^{\bd-1}}\left(t+\frac{d}2\right).
    \]
\end{proof}

\section{Conclusions} \label{sec_conclusions}

In this work, a new class of activation functions was introduced, characterized by two key properties: they are refinable, and they sum the identity.

It was demonstrated how these properties can be utilized to add new neurons and layers without altering the NN output, at least for any input data belonging to a specific, arbitrarily large, set. Explicit formulas for constructing the new neurons and layers were provided and pseudocode for the algorithms was presented.

The proposed theoretical results were applied to define the spline activation functions within this class. Specific properties were presented for these functions, including formulas for computing $\sigma_{B^1}$ and $\sigma_{B^2}$ as well as their derivatives in a manner compatible with backpropagation.

Several open questions remain for future investigation. We hope that these results can lead to improvements when combined with multi-level training algorithms, such as those employed in structural learning, by either enhancing NN performance or accelerating convergence during the training phase, similarly to other function-preserving transformations (e.g., \cite{CGS15,WWRC16,WWCW19}). In particular, we wonder whether this potential improvement depends on the \emph{approximation order} of the subdivision scheme, linked to the refinable activation function. Finally, an efficient and accurate practical implementation requires closed-form expressions for $\sigma$ and $\sigma'$. In this line, extending Proposition \ref{prop_backpropagation} to the case $d > 2$ or to more general refinable functions would be highly beneficial.

\section*{Acknowledgments}
This research has been supported through projects CIAICO/2021/227, PID2020-117211GB-I00 funded by MCIN/AEI/10.13039/501100011033 and PID2023-146836NB-I00 funded by MCIU/AEI/10.13039/5011 00011033.

\section*{Reproducibility}

The Wolfram Mathematica notebook file containing symbolic computations that complement the proofs presented in the manuscript, are available on Github:

\noindent\url{https://github.com/serlou/refinable-neural-networks}.

\section*{Declaration of Generative AI and AI-assisted technologies in the writing process}

During the preparation of this work the author used ChatGPT in order to improve readability and language. After using this service, the author reviewed and edited the content as needed and take full responsibility for the content of the publication.

\bibliography{biblio}

\appendix

\section{Algorithms for refining neural networks}
\label{algorithms}

Building upon the theoretical results established in Section \ref{sec_refining}, this appendix presents three algorithms: one for widening network layers via neuron splitting and two for inserting new layers.

To ensure notational consistency across all algorithms and the results of Section \ref{sec_refining}, let $\mathcal{N} = \{\hat L^j:\R^{\hat n_j}\to\R^{\hat n_{j+1}}\}_{j=0}^{m_L-1}$ denote a NN with $m_L$ layer operators. Within an object-oriented programming framework, we assume the availability of getter and setter methods to access the following properties:
\begin{itemize}
    \setlength\itemsep{0em}
    \item Parameters of each layer operator: Activation function, weights and biases.
    \item Parameters $a, \nA, \tau$ defining the refinability of a function (Definition \ref{def_refinable}).
    \item Parameters $\mu, B, I$ for functions that sum the identity (Definition \ref{defi_sum_identity}), along with $\delta$, defined as the largest positive value satisfying $(-\delta,\delta)\subset I$.
\end{itemize}

Algorithm \ref{alg_grow_layer}, derived from Theorem \ref{thm_increase_neurons}, enables neuron splitting to increase the width of a selected layer. Algorithms \ref{alg_insert_layer_pre} and \ref{alg_insert_layer_post} allow inserting a new layer, but they differ in the number of neurons allocated to the new layer, which is determined by the size of the preceding or subsequent layer, respectively. They draw upon Theorem \ref{thm_insert_layer_pre} with Remark \ref{rmk_choose_beta_1} and Theorem \ref{thm_insert_layer_post} with Remark \ref{rmk_choose_beta_2}, respectively.

\begin{algorithm}[H]
    \caption{Splitting Neurons in an Existing Layer \label{alg_grow_layer}}
    \begin{algorithmic}[1]
        \Require Target layer index $j_*$, neuron subset $\Omega \subset \{1,\ldots,n_{j_*}\}$ to split, and network $\mathcal{N}$ whose $\hat L^{j_*-1}$ has a refinable activation function.
        \Ensure Updated network $\mathcal{N}$ with modified layers $\hat L^{j_*-1},\hat L^{j_*}$.
        \Statex
        
        \Statex \textbf{Align notation with Section \ref{sec_increase_neurons}:}
        \State \(L^0 \gets \hat L^{j_*-1}\), \(L^1 \gets \hat L^{j_*}\)
        \State \(n_0 \gets \hat n_{j_*-1}\), \(n_1 \gets \hat n_{j_*}\), \(n_2 \gets \hat n_{j_*+1}\)
        
        \Statex
        \Statex \textbf{Initialize parameters:}
        \State Get \(W^0, b^0, \sigma^0\) from \(L^0\)
        \State Get \(W^1\) from \(L^1\)
        \State \(m \gets |I|\) \Comment{Cardinality of interval \(I\)}
        \State Get \(a, \nA, \tau\) from \(\sigma^0\)
        \State \(\bar n_1 \gets n_1 + m(\nA - 1)\)
        \State Initialize matrices \(\overline{W}^0 \in \R^{\bar n_1 \times n_0}\), \(\overline{b}^0 \in \R^{\bar n_1}\), \(\overline{W}^1 \in \R^{n_2 \times \bar n_1}\)
        
        \Statex
        \Statex \textbf{Apply Theorem \ref{thm_increase_neurons}:}
        \State \(k \gets 0\) \Comment{Index for new matrices}
        \For{\(i = 0\) \textbf{to} \(n_1 - 1\)} \Comment{Iterate over original neurons}
        \If{\(i \in \Omega\)}
        \For{\(l = 0\) \textbf{to} \(\nA - 1\)} \Comment{Split neuron \(i\) into \(\nA\) subunits}
        \State \(\overline{W}^0_{k,:} \gets \arity W^0_{i,:}\)
        \State \(\overline{b}^0_k \gets \arity b^0_i + \tau - l\)
        \State \(\overline{W}^1_{:,k} \gets a_{l} W^1_{:,i}\)
        \State \(k \gets k + 1\)
        \EndFor
        \Else \Comment{Retain original neuron}
        \State \(\overline{W}^0_{k,:} \gets W^0_{i,:}\) 
        \State \(\overline{b}^0_k \gets b^0_i\)
        \State \(\overline{W}^1_{:,k} \gets W^1_{:,i}\)
        \State \(k \gets k + 1\)
        \EndIf
        \EndFor
        \State Set \(\overline{W}^0, \overline{b}^0\) in \(\hat L^{j_*-1}\)
        \State Set \(\overline{W}^1\) in \(\hat L^{j_*}\)
    \end{algorithmic}
\end{algorithm}

\begin{algorithm}[!h]
    \caption{Inserting a New Layer (Option 1) \label{alg_insert_layer_pre}}
    \begin{algorithmic}[1]
        \Require $\mathcal{N}$ a NN, $j_*$ the insertion position, $\sigma^0$ an activation function summing the identity, and training data $\{y_k\}_{k=0}^{m_T-1}\subset\R^{n_0}$.
        \Ensure Updated network $\mathcal{N}$ with an extra layer at $j_*$.
        \Statex

        \Statex \textbf{Align notation with Section \ref{sec_insert_layer}:}
        \State $L \gets \hat L^{j_*-1}$
        \State \(n_0 \gets \hat n_{j_*-1}\), \(n_1 \gets \hat n_{j_*}\)

        \Statex
        \Statex \textbf{Initialize parameters:}
        \State Get \(W, b, \sigma\) from \(L\)
        \State Get \(\mu, B, \delta\) from \(\sigma^0\)
        \State $\bar n \gets B n_0$ \Comment{New layer size}
        \State Initialize $W^0 \gets 0 \in\R^{\bar n\times n_0}, b^0 \gets 0 \in\R^{\bar n}, W^1 \gets 0\in\R^{n_1\times \bar n}$

        \Statex
        \Statex \textbf{Compute $\beta$ as in Remark \ref{rmk_choose_beta_1}:}
        \For{$j=0$ to $j_*-2$}
        \For{$k=0$ to $m_T-1$}
        \State $y_k \gets \hat L^j(y_k)$ \Comment{Compute the output of the previous layers}
        \EndFor
        \EndFor
        \State $\beta \gets \frac{\delta}{2\max_{k} \{\|y_k\|_\infty\}}$

        \Statex
        \Statex \textbf{Apply Theorem \ref{thm_insert_layer_pre}:}
        \For {$i=0$ to $n_0-1$}
        \For{$l=0$ to $B-1$}
        \State $W^0_{l+iB,i} \gets \beta$
        \State $b^0_{l+iB} \gets \mu - l$
        \State $W^1_{:,l+iB} \gets \frac1\beta W_{:,i}$
        \EndFor
        \EndFor

        \Statex
        \Statex \textbf{Update network:}
        \State $L^0(x) := \sigma^0(W^0 x + b^0)$
        \State $L^1(x) := \sigma(W^1 x + b)$
        \State Set \(\hat L^0,\ldots,\hat L^{j_*-2},L^0,L^1,\hat L^{j_*},\ldots,\hat L^{m_L-1}\) as the sequence of layer operators in $\mathcal{N}$
    \end{algorithmic}
\end{algorithm}

\begin{algorithm}[!h]
    \caption{Inserting a New Layer (Option 2) \label{alg_insert_layer_post}}
    \begin{algorithmic}[1]
        \Require $\mathcal{N}$ a NN, $j_*$ the insertion position, $\sigma^0$ an activation function summing the identity, and training data $\{y_k\}_{k=0}^{m_T-1}\subset\R^{n_0}$.
        \Ensure Updated network $\mathcal{N}$ with an extra layer at $j_*$.
        \Statex

        \Statex \textbf{Align notation with Section \ref{sec_insert_layer}:}
        \State $L \gets \hat L^{j_*-1}$
        \State \(n_0 \gets \hat n_{j_*-1}\), \(n_1 \gets \hat n_{j_*}\)

        \Statex
        \Statex \textbf{Initialize parameters:}
        \State Get \(W, b, \sigma\) from \(L\)
        \State Get \(\mu, B, \delta\) from \(\sigma^0\)
        \State $\bar n \gets B n_1$ \Comment{New layer size}
        \State Initialize $W^0 \gets 0 \in\R^{\bar n\times n_0}, b^0 \gets 0 \in\R^{\bar n}, W^1 \gets 0\in\R^{n_1\times \bar n}$

        \Statex
        \Statex \textbf{Compute $\beta$ as in Remark \ref{rmk_choose_beta_2}}
        \For{$j=0$ to $j_*-2$}
        \For{$k=0$ to $m_T-1$}
        \State $y_k \gets \hat L^j(y_k)$ \Comment{Compute the output of the previous layers}
        \EndFor
        \EndFor
        \State $\beta \gets \frac{\delta}{2\max_{k} \|W y_k + b\|_\infty}$
        
        \Statex
        \Statex \textbf{Apply Theorem \ref{thm_insert_layer_post}:}
        \For {$i=0$ to $n_1-1$}
        \For{$l=0$ to $B-1$}
        \State $W^0_{i+l n_1,:} \gets \beta W_{i,:}$
        \State $b^0_{i + l n_1} \gets \beta b_i + \mu - l$
        \State $W^1_{i,i+l n_1} \gets \frac1\beta$
        \EndFor
        \EndFor

        \Statex
        \Statex \textbf{Update network:}
        \State $L^0(x) := \sigma^0(W^0 x + b^0)$
        \State $L^1(x) := \sigma(W^1 x)$
        \State Set \(\hat L^0,\ldots,\hat L^{j_*-2},L^0,L^1,\hat L^{j_*},\ldots,\hat L^{m_L-1}\) as the sequence of layer operators in $\mathcal{N}$
    \end{algorithmic}
\end{algorithm}

\end{document}